\newcommand{\nsection}{\@startsection{section}{1}{\z@}%
     {-3.5ex plus-1ex minus-.2ex}%
     {2.3ex plus.2ex}%
     {\reset@font\center\large\sc}}
\renewenvironment{thebibliography}[1]
{\nsection*{\refname\@mkboth{\refname}{\refname}}%
   \list{\@biblabel{\@arabic\c@enumiv}}%
        {\settowidth
	\labelwidth{\@biblabel{#1}}%
         \leftmargin
	 \labelwidth
         \advance
	 \leftmargin
	 \labelsep
         \@openbib@code
         \usecounter{enumiv}%
         \let\p@enumiv\@empty
	 \parskip=0pt
	 \itemsep=1pt
	 \parsep=1pt
	 \itemindent=\z@
         \renewcommand\theenumiv{\@arabic\c@enumiv}}%
   \sloppy
   \clubpenalty4000
   \@clubpenalty\clubpenalty
   \widowpenalty4000%
   \footnotesize
   \sfcode`\.\@m}
  {\def\@noitemerr
    {\@latex@warning{Empty `thebibliography' environment}}%
   \endlist}
 \renewcommand{\section}%
  {\@startsection{section}%
  {1}%
  {\z@}%
  {-3.5ex plus -1ex minus -.2ex}%
  {1ex}%
  {\reset@font\large\bfseries}
  }%
 \def\@seccntformat#1{\csname the#1\endcsname.\hspace{2ex}}
\newtheoremstyle{thm}
 {3pt}
 {3pt}
 {\itshape}
 {}
 {\bf}
 {. ---}
 {0.5em}
 {}
\newtheoremstyle{dfn}
 {3pt}
 {3pt}
 {}
 {}
 {\bf}
 {. {---}}
 {0.5em}
 {}
\theoremstyle{thm}
\newtheorem{thm}[subsection]{Theorem}
\newtheorem{lem}[subsection]{Lemma}
\newtheorem{cor}[subsection]{Corollary}
\newtheorem*{prop*}{Proposition}
\newtheorem*{conj*}{Conjecture}
\newtheorem*{thm*}{Theorem}
\newtheorem*{lem*}{Lemma}
\newtheorem*{cor*}{Corollary}
\theoremstyle{dfn}
\newtheorem{rem}[subsection]{Remark}
\newtheorem*{rem*}{Remark}
\newtheorem{cl}[subsection]{Claim}
\newtheorem{rem1}[subsection]{Remark\footnote{
This remark was pointed out to the author by A. Shiho.}}
\newcommand{\mr}[1]{\mathrm{#1}}
\newcommand{\ms}[1]{\mathscr{#1}}
\newcommand{\mc}[1]{\mathcal{#1}}
\newcommand{\mb}[1]{\mathbb{#1}}
\newcommand{\DdagQ}[1]{\mathscr{D}^\dag_{{#1},\mathbb{Q}}}
\newcommand{\indlim}{\mathop{\underrightarrow{\mathrm{lim}}}}
\newcommand{\loc}{\mc{K}_{\mr{loc}}}
\begin{document}
\title{Langlands program for $p$-adic coefficients and the petites
camarades conjecture}
\author{Tomoyuki Abe}
\date{}
\maketitle

\begin{abstract}
 In this paper, we prove that, if Deligne's ``petites camarades
 conjecture'' holds, then a Langlands type correspondence holds also for
 $p$-adic coefficients on a smooth curve over a finite field. We also
 prove that any overconvergent $F$-isocrystal of rank less than or equal
 to $2$ on a smooth variety is $\iota$-mixed.
\end{abstract}

\section{Introduction}
Let $p$ be a prime number, and $k$ be a finite field with $q=p^s$
elements. Let $X$ be a smooth proper geometrically connected curve over
$\mr{Spec}(k)$ with function field $\mc{K}$. First, we put:
\begin{center}
 \begin{tabular}{cp{14cm}}
  $\mc{A}_r$:& The set of isomorphism classes of irreducible cuspidal
  automorphic representations $\pi$ of $\mr{GL}_r(\mb{A}_{\mc{K}})$,
  where $\mb{A}_{\mc{K}}$ denotes the ring of adeles of $\mc{K}$, such
  that the order of the central character of $\pi$ is finite.\\
 \end{tabular}
\end{center}
Let $U$ be an open dense subscheme of $X$. Let $K$ be a finite
extension of $K_0:=W(k)\otimes\mb{Q}$. We fix an algebraic closure
$\overline{\mb{Q}}_p$ of $K_0$. For an overconvergent
$F$-isocrystal $E$ of rank $r$ on a dense open subset $U$ of $X/K$ (see
\S \ref{overconv} for the definition), we say that it is {\em of
finite determinant} if there exists a positive integer $n$ such that the
$n$-th tensor power of $\det(E):=\bigwedge^rE$ is the trivial
overconvergent $F$-isocrystal. We say that $E$ is {\em absolutely
irreducible} if for any finite extension $L$ of $K$ in
$\overline{\mb{Q}}_p$, the isocrystal $E\otimes_KL$ on $U/L$ is
irreducible. We put:
\begin{center}
 \begin{tabular}{cp{13cm}}
  $\mc{I}_r(U/K)$:& The set of isomorphism classes of absolutely
  irreducible overconvergent $F$-isocrystals of rank $r$ on $U/K$ which
  is of finite determinant.\\
 \end{tabular}
\end{center}
For any finite extension $L$ of $K$, the scalar extension induces
a map $\mc{I}_r(U/K)\rightarrow\mc{I}_r(U/L)$, and we put
$\mc{I}(U):=\indlim_{K/K_0}\mc{I}(U/K)$ where $K$ runs over any finite
extension of $K_0$ in $\overline{\mb{Q}}_p$. When we have two open
subschemes $V\subset U$ of $X$, the restriction induces a map
$\mc{I}_r(U)\rightarrow\mc{I}_r(V)$ by Lemma \ref{irreducible}
below. Using this map, we define a set by
\begin{equation*}
 \mc{I}_r:=\indlim_{U\subset X}\mc{I}_r(U).
\end{equation*}

In this paper, we fix two things: an isomorphism
$\iota\colon\overline{\mb{Q}}_p\cong\mb{C}$, and a root $\pi$ of the
equation $x^{p-1}+p=0$ or equivalently a non-trivial additive character
$\mb{F}_p\rightarrow\overline{\mb{Q}}_p^\times$ (cf.\
\cite[2.4.1]{AM}). We conjecture the following:

\begin{conj*}[Langlands program for $p$-adic coefficients]
 There are two maps $\pi_{\bullet}$ and $E_{\bullet}$ as follows:
 \begin{enumerate}
  \item  There exists a {\em unique} map
	 $\pi_{\bullet}\colon\mc{I}_r\rightarrow\mc{A}_r$ such that for
	 $E\in\mc{I}_r$, the sets of unramified places of $E$ and
	 $\pi_E$ coincide, which we denote by $U$, and for any $x\in
	 |U|$, the eigenvalues of the Frobenius of $E$ at $x$ and the
	 Hecke eigenvalues of $\pi_E$ at $x$ coincide.

  \item  There exists a {\em unique} map
	  $E_{\bullet}\colon\mc{A}_r\rightarrow\mc{I}_r$ such that for
	 $\pi\in\mc{A}_r$, the sets of unramified places of $\pi$ and
	 $E_\pi$ coincide, which we denote by $U$, and for any
	 $x\in |U|$, the eigenvalues of the Frobenius of $E_\pi$ at $x$
	 and the Hecke eigenvalues of $\pi$ at $x$ coincide.
 \end{enumerate}
 Moreover, these maps induce one-by-one correspondence, namely
 $E_{\bullet}\circ\pi_{\bullet}=\mr{id}_{\mc{I}}$, and
 $\pi_{\bullet}\circ E_{\bullet}=\mr{id}_{\mc{A}}$.
\end{conj*}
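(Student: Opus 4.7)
The plan is to combine L.~Lafforgue's $\ell$-adic Langlands correspondence on curves with the petites camarades conjecture, which is taken as a hypothesis and provides compatible companions across coefficient fields. Starting from $E\in\mc{I}_r$, I would choose a prime $\ell\neq p$ and invoke the petites camarades conjecture, together with the fixed isomorphism $\iota$, to obtain on some open $U\subset X$ where $E$ is defined an absolutely irreducible lisse $\overline{\mb{Q}}_\ell$-sheaf $\mc{F}$ of rank $r$ and finite determinant whose Frobenius characteristic polynomials at each $x\in|U|$ match those of $E$ via $\iota$. Lafforgue's theorem then attaches to $\mc{F}$ an element of $\mc{A}_r$, which we declare to be $\pi_E$. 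In the reverse direction, starting from $\pi\in\mc{A}_r$, Lafforgue produces an $\ell$-adic companion and a second application of petites camarades yields an overconvergent $F$-isocrystal $E_\pi$ on an open subscheme of $X$, which, after verifying that it satisfies the defining conditions of $\mc{I}_r$, plays the role of the $p$-adic Langlands parameter of $\pi$.

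\textbf{Verification and bijectivity.} The next step is to show that these constructions preserve rank, absolute irreducibility, and the finite determinant condition, the latter translating on the automorphic side, by geometric class field theory, to the central character being of finite order, which is built into $\mc{A}_r$. One must also check independence from the auxiliary prime $\ell$ and prove that the two sides assign the same set of unramified places, an item that the precise form of petites camarades should supply. The crux of bijectivity is a rigidity statement on both sides. On the automorphic side, strong multiplicity one ensures that $\pi\in\mc{A}_r$ is uniquely determined by its Hecke eigenvalues at almost all places. On the $p$-adic side one needs the analogous rigidity, namely that an absolutely irreducible overconvergent $F$-isocrystal of finite determinant is determined up to isomorphism by its Frobenius characteristic polynomials at all closed points of its domain. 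Granting this, both composites $E_\bullet\circ\pi_\bullet$ and $\pi_\bullet\circ E_\bullet$ preserve the Frobenius/Hecke data pointwise and therefore act as the identity, which also yields the uniqueness of the maps $\pi_\bullet$ and $E_\bullet$.

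\textbf{Main obstacle.} Beyond the standing assumption of the petites camarades conjecture, the hard part is the $p$-adic Chebotarev-type density statement for absolutely irreducible overconvergent $F$-isocrystals. A related subtlety is the fact that both $\mc{I}_r$ and $\mc{A}_r$ are defined by taking a limit over shrinking open subsets $U\subset X$, so before comparing Frobenius and Hecke data one must show that any companion extends to, or descends from, a common open subscheme capturing the unramified locus of all partners in the correspondence. Once these two ingredients are in place the proof reduces to a careful bookkeeping of Frobenius eigenvalues through the chain $E\mapsto\mc{F}\mapsto\pi$ and back, together with the verification that the absolute irreducibility and finite determinant hypotheses are preserved at each step.
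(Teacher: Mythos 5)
There is a genuine gap, and it sits exactly at the heart of the matter: your construction of $\pi_\bullet$ starts by ``invoking the petites camarades conjecture'' to produce an $\ell$-adic companion $\mc{F}$ of a given overconvergent $F$-isocrystal $E$. But Conjecture (D), as formulated (following Crew), only goes in the opposite direction: it attaches a $p$-adic companion to a given irreducible $\overline{\mb{Q}}_\ell$-sheaf. Nothing in the hypothesis produces an $\ell$-adic sheaf out of a $p$-adic isocrystal, and this direction is not a formality --- it is essentially what the map $\pi_\bullet$ amounts to, so your argument is circular at this point. The paper does not use companions at all to build $\pi_\bullet$: it runs the Deligne--Laumon \emph{principe de r\'ecurrence}, an induction on the rank $r$ in which, given $E\in\mc{I}_{r_0}$, one feeds the local data of $E$ into Piatetski-Shapiro's converse theorem, verifying its hypotheses by comparing local $L$- and $\varepsilon$-factors for pairs with lower-rank objects (Claim \ref{coinLE}) and, crucially, using the product formula (\ref{prodform}) for $p$-adic $\varepsilon$-factors of Abe--Marmora. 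That product formula is the new analytic input that replaces the unavailable companion; your proposal has no substitute for it. Conjecture (D) is only used where you also use it, namely to construct $E_\bullet$ from Lafforgue's $\ell$-adic correspondence.

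Two further points you flag but leave unresolved are handled in the paper in ways worth noting. The rigidity statement on the $p$-adic side (an isocrystal is determined up to semi-simplification by its Frobenius characteristic polynomials) is proved in \S\ref{Cebotarev} only for \emph{$\iota$-mixed} isocrystals, via Crew's Weil group and equidistribution theorem; the $\iota$-mixedness needed to apply it is not free --- it comes from the generalized Ramanujan--Petersson bound of Lafforgue \emph{through} the already-constructed $\pi_\bullet$ (Remark \ref{remRPconj}), so the logical order of the induction matters. Likewise, verifying that the $E_\pi$ furnished by (D) is irreducible (so that it lands in $\mc{I}_r$) is not a routine check: the paper argues by contradiction, twisting the irreducible constituents to finite determinant (Lemma \ref{twistdetfin}), applying the induction hypothesis and the proven $\ell$-adic correspondence to those constituents, and invoking the $\ell$-adic \v{C}ebotarev density theorem. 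Both steps depend on having $\pi_\bullet$ in lower rank, which again is only available through the converse-theorem induction your proposal omits.
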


For short, we call this Conjecture (L). In the celebrated paper
\cite[1.2.10 (vi)]{WeilII}, Deligne conjectured existence of ``{\it
petites camarades}'' of smooth $\ell$-adic sheaves. This conjecture was
stated in a vague way, which was later formulated in a clear form by
Crew \cite[4.13]{CrMono} as follows:

\begin{conj*}[petites camarades conjecture for curves]
 Let $U$ be a smooth geometrically connected curve over $k$, and
 $\ms{F}$ be an irreducible smooth $\overline{\mb{Q}}_\ell$-sheaf on $U$
 whose determinant is defined by a
 finite order character of $\pi_1(U)$. Then there exists a number field
 $E$ such that, for any $x\in|U|$, the characteristic polynomial of the
 action of the geometric Frobenius on $\ms{F}_{\overline{x}}$ has
 coefficients in $E$, and for any place $\ms{P}$ of $E$ dividing $p$,
 there is an overconvergent $F$-isocrystal on $U/E_{\ms{P}}$ such that
 the characteristic polynomials of the Frobenius action at $x$ for
 $\ms{F}$ and $E$ coincide for any $x\in|U|$.
\end{conj*}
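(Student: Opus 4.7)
The plan is to reduce the conjecture, via L.~Lafforgue's function-field Langlands correspondence, to the $p$-adic direction of that correspondence, and then to appeal to the conjectural map $E_{\bullet}$ of (L)(2) to produce the desired overconvergent $F$-isocrystal.

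First, I would apply Lafforgue's theorem to $\ms{F}$: since $\ms{F}$ is irreducible with determinant of finite order, it corresponds to an irreducible cuspidal automorphic representation $\pi \in \mc{A}_r$ of $\mr{GL}_r(\mb{A}_{\mc{K}})$ whose Hecke eigenvalues at each unramified place $x \in |U|$ match the Frobenius eigenvalues on $\ms{F}_{\overline{x}}$. Lafforgue's rationality theorem moreover guarantees that the characteristic polynomials of Frobenius on $\ms{F}_{\overline{x}}$, for all $x \in |U|$, have coefficients lying in a common number field $E \subset \overline{\mb{Q}}_\ell$. This already establishes the first half of the conclusion.

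Second, for each place $\ms{P}$ of $E$ above $p$, I would apply the conjectural map $E_{\bullet} \colon \mc{A}_r \to \mc{I}_r$ of Conjecture~(L)(2) to $\pi$, producing an overconvergent $F$-isocrystal defined on some open subset of $X$ over some finite extension of $K_0$ inside $\overline{\mb{Q}}_p$. By the defining property of $E_{\bullet}$, the Frobenius eigenvalues of this isocrystal at each unramified $x$ match the Hecke eigenvalues of $\pi$, and therefore the Frobenius eigenvalues of $\ms{F}$. Fixing an embedding $E \hookrightarrow E_{\ms{P}} \hookrightarrow \overline{\mb{Q}}_p$ and using Galois descent for overconvergent $F$-isocrystals (justified by the fact that all Frobenius eigenvalues lie in $E$), one obtains a $p$-adic companion defined over $E_{\ms{P}}$ on a common open dense subscheme of $U$; shrinking $U$ further if necessary to lie inside the common unramified locus yields the required statement.

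The main obstacle is the second step: it relies on the existence of the map $E_{\bullet}$ for $p$-adic coefficients, which is precisely the object of study of this paper and is established only conditionally on the petites camarades conjecture itself. Thus the strategy is visibly circular with respect to the main result, and an unconditional proof seems to require a genuinely different approach, such as a $p$-adic analogue of the Drinfeld-Lafforgue moduli of shtukas, or a direct construction of the companion through Berthelot's theory of arithmetic $\ms{D}$-modules together with a Chebotarev-type comparison of Frobenius traces between the $\ell$-adic and $p$-adic realizations. This circularity is precisely the reason the present paper takes the petites camarades conjecture as a hypothesis rather than attempting to establish it.
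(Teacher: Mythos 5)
The statement you were asked to prove is a \emph{conjecture} (labelled (D) in the paper), and the paper never proves it unconditionally; it only establishes the equivalence of (D) with the $p$-adic Langlands conjecture (L). You recognize this correctly, and your two-step strategy --- Lafforgue's $\ell$-adic correspondence and rationality theorem to pass from $\ms{F}$ to $\pi \in \mc{A}_r$ and to produce the number field $E$, then the conjectural map $E_{\bullet}$ of (L)(2) to manufacture the $p$-adic companion --- is precisely the argument the paper compresses into one sentence in the introduction when it asserts that (L) implies (D). You are also right that the substantive content of the paper runs the other way (proving (D) $\Rightarrow$ (L) via the $p$-adic product formula and the converse theorem), so invoking $E_{\bullet}$ here cannot yield an unconditional proof of (D); what it yields is one direction of the equivalence.

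One step you wave at deserves more scrutiny than ``Galois descent justified by the fact that all Frobenius eigenvalues lie in $E$.'' The map $E_{\bullet}$ lands in $\mc{I}_r = \indlim_{U,K}\mc{I}_r(U/K)$, so the companion it produces is a priori defined over some finite extension $K$ of $K_0$ inside $\overline{\mb{Q}}_p$ that need not be contained in $E_{\ms{P}}$; Crew's formulation of (D) asks for an isocrystal over $E_{\ms{P}}$ itself. Unlike the $\ell$-adic setting, there is no Galois description of overconvergent $F$-isocrystals to make such a descent formal, and rationality of the characteristic polynomials at closed points does not by itself imply the object descends. The paper does not address this point either when it asserts (L) $\Rightarrow$ (D) --- so this is not a defect specific to your write-up --- but it is a genuine gap that a complete argument would have to fill, for instance by arguing directly with the Frobenius structure or by replacing the target of (D) with the weaker conclusion ``an isocrystal over some finite extension of $E_{\ms{P}}$,'' which is what $E_{\bullet}$ literally gives.
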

For short, we call this Conjecture (D). By using the proven Langlands
correspondence in the $\ell$-adic cases \cite[VI.9]{La}, if Conjecture
(L) holds, Conjecture (D) holds and moreover, we can take the
overconvergent $F$-isocrystal in Conjecture (D) to be
irreducible. The main theorem of this paper is the following:

\begin{thm*}
 Conjecture {\normalfont (L)} and {\normalfont (D)} are equivalent.
\end{thm*}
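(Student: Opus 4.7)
The direction (L) $\Rightarrow$ (D) is essentially already outlined in the excerpt preceding the theorem: given an irreducible smooth $\overline{\mb{Q}}_\ell$-sheaf $\ms{F}$ on $U$ with finite-order determinant, L.~Lafforgue's $\ell$-adic Langlands correspondence produces an irreducible cuspidal automorphic representation $\pi \in \mc{A}_r$ with Hecke eigenvalues matching the Frobenius eigenvalues of $\ms{F}$, and the map $E_\bullet$ from (L) then yields the desired overconvergent $F$-isocrystal companion; the number field $E$ required in (D) can be taken to be the field of Hecke eigenvalues of $\pi$, which is known to be a number field by Lafforgue's work.

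For (D) $\Rightarrow$ (L), the first step is to construct $E_\bullet\colon\mc{A}_r\to\mc{I}_r$ as the composition $\pi \mapsto \ms{F}_\pi \mapsto E_\pi$, where $\ms{F}_\pi$ is the $\ell$-adic realization of $\pi$ via Lafforgue and $E_\pi$ is its $p$-adic companion via (D). To verify $E_\pi \in \mc{I}_r$: both absolute irreducibility and finite-order determinant are detected by the Frobenius characteristic polynomials at closed points (the latter via a $p$-adic Chebotarev-type density statement for overconvergent $F$-isocrystals), and these polynomials agree with those of the absolutely irreducible $\ms{F}_\pi$ of finite-order determinant by construction. Uniqueness of $E_\pi$ satisfying the Frobenius compatibility demanded in (L)(ii) follows from the same $p$-adic Chebotarev statement.

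The heart of the theorem is the construction of $\pi_\bullet\colon\mc{I}_r\to\mc{A}_r$, which is equivalent to the surjectivity of $E_\bullet$: given $E\in\mc{I}_r$, one must produce a $\pi$ with $E_\pi\cong E$. Injectivity of $E_\bullet$ is clear from strong multiplicity one for $\mr{GL}_r$ over function fields. The natural approach to surjectivity is to construct a $\overline{\mb{Q}}_\ell$-companion $\ms{F}_E$ of $E$ and apply Lafforgue; however, (D) supplies only the $\ell$-to-$p$ direction of the companions problem, so this $p$-to-$\ell$ reversal is the main obstacle. My plan is: (i) show that the Frobenius characteristic polynomials of $E$ lie in a number field, using the $\iota$-mixedness of $E$ (proved in this paper for rank $\leq 2$, and needed either as additional input or through a bootstrap in higher rank); (ii) convert (D) into a $p$-to-$\ell$ statement by a cardinality-based finitude argument: for each bounded ramification datum, (D) composed with Lafforgue furnishes an injection from the finite set of automorphic representations of bounded conductor into the finite set of overconvergent $F$-isocrystals, and independent finitude on the $p$-adic side (in the spirit of Deligne's finitude for $\ell$-adic sheaves) together with L-function matching forces this injection to be a bijection. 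An alternative route is to invoke a Cogdell--Piatetski-Shapiro converse theorem applied to the twisted L-functions $L(E\otimes\ms{G},s)$, for $\ell$-adic companions $\ms{G}$ of lower-rank sheaves furnished by (D), to produce $\pi_E$ directly without first building an $\ell$-adic companion of $E$. Once surjectivity is in hand, the compatibility relations $\pi_\bullet\circ E_\bullet=\mr{id}$ and $E_\bullet\circ\pi_\bullet=\mr{id}$ follow formally from strong multiplicity one and the $p$-adic Chebotarev density, respectively.
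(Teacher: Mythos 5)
Your treatment of the easy direction and of the map $E_\bullet$ is broadly in line with the paper: (L)$\Rightarrow$(D) via Lafforgue, and $E_\bullet$ as $\pi\mapsto\ms{F}_\pi\mapsto E_\pi$ using (D), with uniqueness from the $p$-adic \v{C}ebotarev-type proposition (note, though, that this proposition applies only to $\iota$-mixed isocrystals, so you must first invoke Ramanujan--Petersson to know $E_\pi$ is $\iota$-pure of weight $0$). One local inaccuracy: irreducibility of $E_\pi$ is \emph{not} ``detected by the Frobenius characteristic polynomials'' on the $p$-adic side; the \v{C}ebotarev-type statement only identifies semi-simplifications of $\iota$-mixed isocrystals and says nothing about irreducibility by itself. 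The paper has to argue indirectly: if $E_\pi$ were reducible, its irreducible constituents $E_i$ (twisted to finite determinant by Lemma \ref{twistdetfin}) are of lower rank, hence by the induction hypothesis correspond to cuspidal representations and then, via Lafforgue, to $\ell$-adic sheaves $\ms{F}_i$; comparing $\bigoplus\ms{F}_i(\chi_i^{-1})$ with the irreducible $\ms{F}_\pi$ and applying the \emph{$\ell$-adic} \v{C}ebotarev theorem gives the contradiction. Without the lower-rank correspondence this step has no substitute.

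The genuine gap is in your construction of $\pi_\bullet$, which you correctly identify as the heart of the matter. Your primary route, the cardinality argument, requires an ``independent finitude on the $p$-adic side'' --- a finiteness theorem for overconvergent $F$-isocrystals of bounded rank and ramification --- which is not available (its $\ell$-adic analogue is itself a \emph{consequence} of the Langlands correspondence), so this route is unsupported and essentially circular; moreover even granting finiteness of both sides, an injection between finite sets of a priori unrelated cardinalities need not be a bijection. Your alternative route (a converse theorem applied to twisted $L$-functions) is the correct idea and is what the paper actually does, following Lafforgue's \emph{principe de r\'{e}currence}: one twists $E$ by the \emph{$p$-adic} companions $E_{\pi'}$ of lower-rank cuspidal representations (supplied by the induction hypothesis together with (D)), not by $\ell$-adic sheaves, and one needs the functional equations and local--global compatibilities for $L(E\otimes E_{\pi'},t)$ in order to verify the hypotheses of Piatetski-Shapiro's converse theorem. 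The ingredient that makes this possible --- and which your proposal never mentions --- is the product formula for $p$-adic $\varepsilon$-factors of \cite{AM} (formula (\ref{prodform})), together with the coincidence of local $L$- and $\varepsilon$-factors under the lower-rank correspondence (Claim \ref{coinLE}, using \cite{Ke} or \cite{AC} in place of Lafforgue's VI.5). Finally, after the converse theorem produces an automorphic representation, one must still prove it is \emph{cuspidal} when $E$ is ramified; the paper does this by comparing $E$ with $\bigoplus_i E_{\pi^i}$ coming from a hypothetical Eisenstein decomposition, using $\iota$-purity and the $p$-adic \v{C}ebotarev proposition to contradict irreducibility of $E$. These steps (product formula, local factor comparison, cuspidality) are the substance of the proof and are missing from your outline.
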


The proof of this theorem have been made possible thanks to the product
formula for $p$-adic epsilon factors proven in \cite{AM}, and we use the
standard argument to deduce the theorem from the product
formula (cf.\ \cite[9.7]{Delconst} for $\mr{GL}_2$ case, and
\cite[3.2.2.3]{Lau} for $\mr{GL}_n$ case).

In the last part of this paper, we prove that any overconvergent
$F$-isocrystal of rank less than or equal to $2$ on a smooth variety is
$\iota$-mixed, which can be seen as a part of the results we can derive
from Conjecture (L).

\section{Overconvergent $F$-isocrystals}
\label{overconv}
For the Langlands correspondence, we need to consider overconvergent
$F$-isocrystals on $U/K$ where $U$ is an open dense subscheme of $X$ and
$K$ is a finite extension of $K_0$. When $K$ is not totally ramified
over $K_0$, this concept is not defined {\it a priori}, so we briefly
review the concept in this section although it might be trivial for
experts.

Let $Y$ be a scheme of finite type over $k$.
We consider the $s$-th Frobenius automorphism $F$ on $k$, which is the
identity since the number of elements of $k$ is $p^s$. We denote by
$F\mbox{-}\mr{Isoc}^\dag(Y/K_0)$ the category of overconvergent
$F$-isocrystals on $Y/K_0$ (cf.\ \cite{Be}).
We define the category of overconvergent
$F$-isocrystals on $Y/K$ denoted by $F\mbox{-}\mr{Isoc}^\dag(Y/K)$
to be $(F\mbox{-}\mr{Isoc}^\dag(Y/K_0))_{K}$ using the notation of
\cite[7.3]{AM} ({\it i.e.}\ the category of couples $(E_0,\lambda)$ such
that $E_0\in\mr{Ob}\bigl(F\mbox{-}\mr{Isoc}^\dag(Y/K_0)\bigr)$ and
$\lambda\colon K\rightarrow\mr{End}(E_0)$, which is called the
{\em $K$-structure of $E$}, is a homomorphism of
$K_0$-algebras). If $K$ is a totally ramified extension of $K_0$, the
category $F\mbox{-}\mr{Isoc}^\dag(Y/K)$ is nothing but that defined in
\cite{Be}. We see easily that the
category $F\mbox{-}\mr{Isoc}^\dag(\mr{Spec}(k)/K)$ is equivalent to the
category of finite dimensional $K$-vector spaces $V$ endowed with an
automorphism of $K$-vector spaces $V\xrightarrow{\sim}V$. Let $x$ be a
closed point of $Y$, and $i_x\colon \{x\}\hookrightarrow Y$ be the
closed immersion. Then using the equivalence, $i_x^*E$ is a finite
dimensional $K$-vector space endowed with an automorphism, and we are
able to consider the characteristic polynomial of the Frobenius action
of $E$ at $x$.

For an object $E=(E_0,\lambda)$ of
$F\mbox{-}\mr{Isoc}^\dag(Y/K)$, we note that $\mr{rk}(E_0)$ is divisible
by $[K:K_0]$, and we define the rank of $E$ to be
$\mr{rk}(E):=[K:K_0]^{-1}\cdot\mr{rk}(E_0)$. As written in
\cite[7.3.5]{AM}, we automatically have rigid cohomology
$H^r_{\mr{rig}}(Y,E)$ which is a {\em $K$-vector space with Frobenius
structure} and so on.

Now, let $x\in X\setminus U$. Let $K_x$ be the unramified extension of
$K_0$ corresponding to the extension $k(x)/k$. An overconvergent
$F$-isocrystal $E_0$ on $U/K_0$ induces a differential module on the
Robba ring $\mc{R}_{K_x}$ over $K_x$ with Frobenius structure. We denote
this by $E_0|_{\eta_x}$. Given an overconvergent $F$-isocrystal
$E=(E_0,\lambda)$ on $U/K$, we put $E|_{\eta_x}$ to be the differential
module with Frobenius structure $E_0|_{\eta_x}$ endowed with
$K$-structure. We see that $\mr{irr}(E_0|_{\eta_x})$ is divisible by
$[K:K_0]$, and we define the irregularity of $E$ at $x$ to be
$\mr{irr}(E|_{\eta_x}):=[K:K_0]^{-1}\cdot\mr{irr}(E_0|_{\eta_x})$.

Now, we prove the following irreducibility result\footnote{
The proof written here was suggested by N. Tsuzuki. One can refer to
\cite{AC} for more general results on the irreducibility.
}:

\begin{lem}
 \label{irreducible}
 Let $W$ be a smooth curve over $k$, and $V\subset U\subset W$ be dense
 open subschemes of $W$, and let $j\colon V\hookrightarrow U$ be the
 open immersion. Then the functor $j^\dag\colon
 F\mbox{-}\mr{Isoc}^\dag(U,W/K)\rightarrow
 F\mbox{-}\mr{Isoc}^\dag(V,W/K)$ preserves irreducibility.
\end{lem}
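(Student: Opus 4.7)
The plan is to argue by contradiction. Let $E\in F\mbox{-}\mr{Isoc}^\dag(U,W/K)$ be irreducible, and suppose that $j^\dag E$ admits a subobject $G$ with $0\neq G\subsetneq j^\dag E$ in $F\mbox{-}\mr{Isoc}^\dag(V,W/K)$. The strategy is to extend $G$ back across the finitely many points of $U\setminus V$ and then use the full faithfulness of $j^\dag$ to produce a proper nonzero subobject of $E$ on $U$, contradicting its irreducibility.

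The key step is to verify that $G$ extends to $U$. For each closed point $x\in U\setminus V$, localizing along the formal disk at $x$ yields a short exact sequence of $(\varphi,\nabla)$-modules over the Robba ring $\mc{R}_{K_x}$:
\[
0\to G|_{\eta_x}\to (j^\dag E)|_{\eta_x}\to (j^\dag E/G)|_{\eta_x}\to 0.
\]
Since $E$ itself is an overconvergent $F$-isocrystal on $U$ and $x\in U$, the middle term has irregularity zero at $x$. By the additivity of the irregularity for $(\varphi,\nabla)$-modules (Christol--Mebkhout), both the sub and the quotient also have irregularity zero at $x$. Applying Kedlaya's extension criterion, which is a consequence of the $p$-adic local monodromy theorem, $G$ then extends uniquely to an object $\widetilde G\in F\mbox{-}\mr{Isoc}^\dag(U,W/K)$ with $j^\dag\widetilde G\cong G$.

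Finally, by Kedlaya's full faithfulness theorem for $j^\dag$, the inclusion $G=j^\dag\widetilde G\hookrightarrow j^\dag E$ is the image under $j^\dag$ of a unique morphism $\widetilde G\to E$; since $j^\dag$ is exact and faithful, this map is injective, and a rank comparison over $V$ shows that its image is a proper nonzero subobject of $E$, contradicting the irreducibility of $E$. Hence no such $G$ can exist, and $j^\dag E$ is irreducible.

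The main obstacle is the extension step. One needs both the additivity of irregularity in order to transfer the zero-irregularity condition from $j^\dag E$ to its sub and its quotient, and the deep extension criterion that promotes this pointwise zero-irregularity condition to global extendability as an overconvergent $F$-isocrystal across the missing points. Both belong to the theory of $p$-adic differential equations on curves developed by Christol--Mebkhout, André, and Kedlaya; granted these inputs, the full faithfulness of $j^\dag$ closes the argument cleanly.
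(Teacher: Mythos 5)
The step that breaks is the extension step, and it is precisely the hard content of the lemma. From additivity and non-negativity of irregularity you do get $\mr{irr}(G|_{\eta_x})=0$ at each $x\in U\setminus V$, but there is no ``extension criterion'' asserting that a $(\varphi,\nabla)$-module over $\mc{R}_{K_x}$ with irregularity zero comes from (hence extends across) the disk. Irregularity zero only says that the local monodromy is \emph{tame} (vanishing $p$-adic Swan conductor); tame but nontrivial monodromy already obstructs extension. For instance, a nontrivial Kummer-type rank-one overconvergent $F$-isocrystal on $\mb{G}_m$ has irregularity $0$ at the origin and does not extend to $\mb{A}^1$. What Kedlaya's semistable reduction theorem gives is \emph{log}-extendability after pulling back along a suitable finite cover, which requires unipotent (not merely tame) local monodromy and in any case does not produce an object of $F\mbox{-}\mr{Isoc}^\dag(U,W/K)$. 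So the sentence ``Applying Kedlaya's extension criterion \dots $G$ extends uniquely to $U$'' has no theorem behind it, and without it the full-faithfulness endgame never gets off the ground. What you would actually need to show is that $G|_{\eta_x}$ has \emph{trivial} local monodromy at each missing point, i.e.\ that a sub-$(\varphi,\nabla)$-module of the restriction to $\mc{R}_{K_x}$ of a module defined on the whole disk again comes from the disk; this is not formal (the Remark after the lemma, on the failure of the analogous statement for convergent isocrystals, is a warning that softness is not to be expected here), and it is exactly what your argument assumes rather than proves.

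By contrast, the paper avoids any such local criterion: starting from a nonzero quotient $j^\dag E\twoheadrightarrow E'_V$, it invokes Kedlaya's semistable reduction to find a finite cover $f\colon W'\to W$, \'etale over $V$, over which the pullback of $E'_V$ becomes log-extendable and in fact extends over $U'$ as a quotient of $f^*E$; it then descends this extension back to $U$ using the arithmetic $\ms{D}$-module pushforward $f_+$ together with the trace map, producing a quotient $E'$ of $E$ on $U$ restricting to $E'_V$, so that irreducibility of $E$ forces $E'_V=j^\dag E$. Your outline (contradiction via a subobject, then full faithfulness of $j^\dag$) is a reasonable frame, but the bridge between ``irregularity zero'' and ``extendable over $U$'' is missing, and filling it would require an argument of essentially the same depth as the covering-and-descent proof in the paper.
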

\begin{proof}
 In the following argument, all the modules are implicitly equipped with
 $K$-structure. Let $E$ be an irreducible object in
 $F\mbox{-}\mr{Isoc}^\dag(U,W/K)$. Assume there exists a non-zero
 overconvergent $F$-isocrystal $E'_V$ and a surjection $j^\dag
 E\twoheadrightarrow E'_V$. Then we can find a finite covering $f\colon
 W'\rightarrow W$ such that the following diagram is commutative
 \begin{equation*}
  \xymatrix{
   V'\ar[r]^{j'}\ar[d]_{f_V}\ar@{}[rd]|\square&U'\ar[d]^f\ar[r]
   \ar@{}[dr]|\square&W'\ar[d]^{f_W}
   \\V\ar[r]_j&U\ar[r]&W
   }
 \end{equation*}
 where $f_V$ is finite \'{e}tale and $f_V^*E'_V$ is log-extendable to
 $U'$ by \cite{Kedcss}. Since $f_V^*E\twoheadrightarrow f_V^*E'_V$,
 $f_V^*E'_V$ extends to an overconvergent $F$-isocrystal $E'_{U'}$ on
 $U'$ which is a quotient of $f^*E$. Let $\ms{W}$ and $\ms{W}'$ be
 smooth formal liftings of $W$ and $W'$ respectively over
 $\mr{Spf}(W(k))$. Let $Z:=W\setminus U$ and
 $Z':=W'\setminus U'$. We have a coherent $\DdagQ{\ms{W}}(^\dag
 Z)$-module $\mr{sp}_*(E)$ and a coherent $\DdagQ{\ms{W'}}(^\dag
 Z')$-module $\mr{sp}_*(E'_{U'})$. Since $f_W$ is proper, we have a
 functor $f_+\colon D_{\mr{coh}}^b(\DdagQ{\ms{W'}}(^\dag
 Z'))\rightarrow D_{\mr{coh}}^b(\DdagQ{\ms{W}}(^\dag Z))$ (see
 \cite[3.4.3]{Be2} for the construction of the functor in the case where
 $f_W$ is not liftable). Then we get a homomorphism
 \begin{equation*}
  \phi\colon\ms{H}^0(f_+f^!\mr{sp}_*(E))\rightarrow
   \ms{H}^0(f_+\mr{sp}_*(E'_{U'})).
 \end{equation*}
 By using the trace homomorphism (see \cite[4.15]{Abe}), $\mr{sp}_*(E)$
 can be seen as a submodule of $\ms{H}^0(f_+f^!\mr{sp}_*(E))$. We define
 $E':=\mr{sp}^*(\phi(\mr{sp}_*(E)))$. Since $j^\dag E'\cong E'_V$, $E'$
 is not equal to $E$ if $E'_V$ is not equal to $j^\dag E$. Since $E$ is
 assumed to be irreducible, this shows that $E'_V=j^\dag E$, and
 conclude the proof.
\end{proof}

\begin{rem1}
 Lemma \ref{irreducible} does not hold if we replace $\mr{Isoc}^\dag$
 by $\mr{Isoc}$. For example, let $f\colon
 Y\rightarrow A:=\mb{P}^1\setminus\{0,1,\infty\}$ be the Legendre family
 defined by $y^2=x(x-1)(x-\lambda)$. Consider the $F$-isocrystal
 $E:=R^1f_{\mr{rig}*}(Y/A)$ of rank $2$. Let $S\subset A$ be the
 finite set of closed points $s$ such that $f^{-1}(s)$ is a
 supersingular elliptic curve, and we put $U:=A\setminus S$. Then the
 convergent $F$-isocrystal $E|_U$ is not irreducible as written in
 \cite[4.15]{Crrep}. However, $E$ is irreducible since all the
 subquotients of $E|_U$ are not overconvergent along $S$ as written in
 {\it ibid.}. The same example is showing that the functor
 $F\mbox{-}\mr{Isoc}^\dag(U,A/K)\rightarrow F\mbox{-}\mr{Isoc}(U/K)$
 does not preserve irreducibility.
\end{rem1}

\section{\v{C}ebotarev density}
\label{Cebotarev}
A technical point in the proof of the theorem is to show the
uniqueness of the maps. In the $\ell$-adic case, this was a consequence
of the \v{C}ebotarev density theorem. Since in general we are not able
to describe overconvergent $F$-isocrystals as representations of
$\mr{Gal}(\overline{\mc{K}}/\mc{K})$, we need a different method to show
this. We will show the following proposition\footnote{After a large part
of this paper had been written, the author was pointed out by A.\
P\'{a}l that the following proposition had been generalized by him and
U. Hartl. However, since we do not know the precise statement of their
theorem, we decided to put the proposition.}:
\begin{prop*}
 \label{p-adicCebo}
 Assume $E$ and $E'$ are {\em $\iota$-mixed} (cf.\
 {\normalfont\cite[10.4]{Cr}} for
 the definition) overconvergent $F$-isocrystals on an open dense
 subscheme $U/K$ of $X$ whose characteristic polynomials of the action
 of the Frobenius at any closed point of $U$ coincide. Then the
 semi-simplifications of $E$ and $E'$ coincide.
\end{prop*}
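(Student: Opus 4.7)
The plan is to prove this by a Brauer--Nesbitt style argument adapted to the $p$-adic setting, reducing to a statement about linear independence of trace functions of irreducible overconvergent $F$-isocrystals and then detecting $\mathrm{Hom}$-dimensions via $L$-functions, using $\iota$-mixedness to invoke weight-theoretic positivity.

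First I would reduce to the case where $E$ and $E'$ are already semisimple by passing to their semisimplifications; the hypothesis on characteristic polynomials is preserved since these depend only on the semisimplification. Writing $E\cong\bigoplus_i m_i F_i$ and $E'\cong\bigoplus_j m'_j F'_j$ as direct sums of pairwise non-isomorphic absolutely irreducible overconvergent $F$-isocrystals (with multiplicities), the assertion becomes: the trace functions $t_F(x):=\mathrm{tr}(F_x\mid F_{\bar{x}})$ attached to absolutely irreducible objects are linearly independent over $\overline{\mb{Q}}_p$ on $|U|$. Next, using the weight filtration available on $\iota$-mixed overconvergent $F$-isocrystals (and the fact that the trace functions of the graded pieces are distinguished by their complex absolute values under $\iota$), I would further reduce to comparing systems of $\iota$-pure irreducibles, and then by Tate twisting to the case where all the $F_i, F'_j$ have the same weight.

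At this point the key step is to compute, from the trace functions alone, the dimension $\dim_K\mathrm{Hom}(F,G)$ for two $\iota$-pure irreducible overconvergent $F$-isocrystals $F,G$ of the same weight. For this I would consider the $L$-function $L(U,F^\vee\otimes G,t)$, which is entirely determined by the characteristic polynomials of Frobenius at closed points, hence by $t_F$ and $t_G$. By the Grothendieck--Lefschetz trace formula for rigid cohomology with overconvergent $F$-isocrystal coefficients on the curve $U$, this $L$-function equals a ratio of characteristic polynomials of Frobenius on the $H^i_{c,\mathrm{rig}}(U,F^\vee\otimes G)$. Applying Kedlaya's $p$-adic analogue of Deligne's weight theorem to the $\iota$-pure sheaf $F^\vee\otimes G$, the reciprocal zeros of these factors have specific complex absolute values under $\iota$, which lets one isolate the order of pole of $L$ at a distinguished point as the contribution of $H^0_{\mathrm{rig}}$; that order equals $\dim_K H^0_{\mathrm{rig}}(U,F^\vee\otimes G)^{F=1}=\dim_K\mathrm{Hom}(F,G)$, which by Schur is $1$ if $F\cong G$ and $0$ otherwise.

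Combining these steps, the Gram matrix $\bigl(\dim_K\mathrm{Hom}(F_i,F'_j)\bigr)$ (and the corresponding ones for pairs $(F_i,F_{i'})$, $(F'_j,F'_{j'})$) is extractable from the trace functions, and on the basis of pairwise non-isomorphic irreducibles of fixed weight it is the identity, so the trace function map is injective on semi-simplifications; comparing the formal identity $\sum_i m_i t_{F_i}=\sum_j m'_j t_{F'_j}$ under this injectivity forces the two multisets to agree, which is the claim. The main obstacle is the second paragraph: arranging the $\iota$-mixed reduction cleanly and, above all, having enough of the $p$-adic Weil conjectures (weight bounds on $H^i_{c,\mathrm{rig}}$ of $\iota$-pure coefficients, plus the full trace formula in the overconvergent setting) to pin down the order of pole of $L(U,F^\vee\otimes G,t)$ at the relevant point. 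Once that machinery is in place the rest of the argument is essentially formal, mirroring the classical $\ell$-adic \v{C}ebotarev density proof.
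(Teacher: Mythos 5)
Your route is genuinely different from the paper's. The paper does not use $L$-functions at all: it transports the problem into Crew's Tannakian formalism (the differential Galois group and the Weil group $W^M_{x_0}$ of $M=E^{\mathrm{ss}}\oplus E'^{\mathrm{ss}}$, together with the compact form $G_{\mathbb{R}}$ available because $M$ is $\iota$-mixed), and then runs an honest \v{C}ebotarev-type argument: the locus of conjugacy classes where the two characteristic polynomials differ is open, stable under the central element $z$ of positive degree, and Crew's equidistribution theorem \cite[10.11]{Cr} for Frobenius conjugacy classes forces it to be empty; Brauer--Nesbitt then concludes. Your proposal instead replaces equidistribution by the now-standard ``$L$-function substitute'': reduce to $\iota$-pure constituents of a fixed weight, and detect $\dim\mathrm{Hom}$ between irreducibles from the order of the pole of $L(U,F^\vee\otimes G,t)$, using the Etesse--Le Stum trace formula, Kedlaya's $p$-adic Weil II \cite{Ke} and Poincar\'{e} duality on the curve. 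The weight-separation step you sketch is fine (each irreducible constituent of an $\iota$-mixed object is $\iota$-pure, and the roots of the characteristic polynomials sort by $\iota$-absolute value), and all the cohomological inputs you need were available; your route avoids the topological/measure-theoretic argument and the choice of $G_{\mathbb{R}}$ and $z$, at the price of heavier cohomological machinery.

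There is, however, one step that does not hold as you state it, and it is exactly the delicate point of this approach. The pole of $L(U,F^\vee\otimes G,t)$ at $t=q^{-1}$ is contributed by $H^2_{\mathrm{rig},c}(U,F^\vee\otimes G)$, not by $H^0_{\mathrm{rig}}$; after Poincar\'{e} duality (which you should invoke explicitly) its order equals the dimension of the \emph{generalized} eigenvalue-$1$ eigenspace of Frobenius on $H^0_{\mathrm{rig}}(U,F^\vee\otimes G)$, which is a priori only $\geq\dim H^0_{\mathrm{rig}}(U,F^\vee\otimes G)^{F=1}=\dim\mathrm{Hom}(F,G)$. If you only have this inequality, the Gram-matrix identities degrade to one-sided bounds and the final multiset comparison does not follow. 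To close the gap you must show Frobenius acts semisimply on that unit part, e.g.\ by noting that $F^\vee\otimes G$ is semisimple as an $F$-isocrystal (tensor product of irreducibles in a Tannakian category over a field of characteristic $0$) and that the ``invariant'' object computing $H^0$ is then a semisimple module over the group of Frobenius classes --- an argument most naturally carried out in Crew's Weil-group formalism, i.e.\ precisely the machinery the paper's proof is built on. So the strategy is sound and salvageable, but as written the key identity ``order of pole $=\dim\mathrm{Hom}$'' is asserted rather than proved, and it is not a formal consequence of the weight bounds alone.
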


Before proving the proposition, we recall some notation in
\cite[10.4, 10.10]{Cr}. Let $M:=E^{\mr{ss}}\oplus E'^{\mr{ss}}$ where
${}^{\mr{ss}}$ denotes the semi-simplification, and take a closed point
$x_0$ in $U$. Associated to this overconvergent $F$-isocrystal, we have
a short exact sequence:
\begin{equation*}
 0\rightarrow\mr{DGal}(M,x_0)\rightarrow W^{M}_{x_0}\rightarrow
  W(\overline{k}/k)\rightarrow0.
\end{equation*}
Here, $\mr{DGal}(M,x_0)$ denotes the differential Galois group
of $M$ (see \cite[2.1]{CrMono}), $W^M_{x_0}$ denotes the
Weil group of $M$ (see [{\it ibid.}, 5.1]), and
$W(\overline{k}/k)\cong\mb{Z}$. We denote the extension of
scalars by $K\rightarrow\mb{C}$ using the isomorphism $\iota$ of
$\mr{DGal}(M,x_0)$ and $W^M_{x_0}$ by $G^0_{\mb{C}}$ and $G_{\mb{C}}$
respectively. Since $M$ is assumed to be $\iota$-mixed, there is a
subgroup $G_{\mb{R}}\subset G_{\mb{C}}$ which projects onto
$W(\overline{k}/k)$ and $G^0_{\mb{C}}\cap G_{\mb{R}}$ is a maximal
compact subgroup of $G^0_{\mb{C}}$. For a group $G$, we denote by
$G^{\natural}$ the set of conjugacy classes. We take an element $z$ of
positive degree in the center of $G_{\mb{R}}$, which exists since $M$ is
semi-simple. For more details, see \cite[10.10]{Cr}.

\begin{proof}
 Let $\rho^{(\prime)}\colon W^M_{x_0}\rightarrow\mr{GL}
 (V_{\rho^{(\prime)}})$ be a representation of $W^M_{x_0}$
 corresponding to $(E^{(\prime)})^{\mr{ss}}$. These define
 continuous complex representations of $G_{\mb{R}}$ denoted by
 $\rho_{\mb{R}}$ and $\rho'_{\mb{R}}$. Since the category of linear
 $\overline{\mb{Q}}_p$-representations of $W^M_{x_0}$
 and that of continuous complex representations of $G_{\mb{R}}$ are
 equivalent by exactly the same argument as \cite[2.2.8]{WeilII}, it
 suffices to show that $\rho_{\mb{R}}\cong\rho'_{\mb{R}}$.

 We endow the quotient topology with $G_{\mb{R}}^{\natural}$ by the
 surjection $\alpha\colon G_{\mb{R}}\twoheadrightarrow
 G_{\mb{R}}^{\natural}$. We denote by $G^{\natural}_i$ the subset of
 $G^{\natural}_{\mb{R}}$ consisting of the elements of degree $i$. Note
 that $G^{\natural}_i$ is both open and closed. Let $S$ be the subset of
 elements $g$ of $G^{\natural}_{\mb{R}}$ such
 that the characteristic polynomials $\det(1-g\cdot t;V_{\rho})$ and
 $\det(1-g\cdot t;V_{\rho'})$ coincide. By the Brauer-Nesbitt theorem,
 it suffices to show that
 $S=G^{\natural}_{\mb{R}}$. Since if the characteristic polynomials for
 $g$ coincide, that of $g^{-1}$ coincide as
 well, we have $S=S^{-1}$. Since the map
 $\widetilde{\rho}^{(\prime)}\colon G\rightarrow\mb{C}[t]$ sending $g$ to
 $\det(1-g\cdot t;V_{\rho^{(\prime)}})$ is continuous,
 $S=(\widetilde{\rho}-\widetilde{\rho}')^{-1}(0)$ is closed in
 $G^{\natural}_{\mb{R}}$. Let $A$ be the complement of $S$ in
 $G_{\mb{R}}$, which is open, and {\em assume that this is not
 empty}. Let $A_i:=A\cap G^{\natural}_i$, which is an open subset of
 $G^{\natural}_{\mb{R}}$. Since $z$ is in the center of $G_{\mb{R}}$, the
 characteristic polynomials of $g$ for an element $g\in G_{\mb{R}}$
 coincide if and only if they coincide for $z^{n}\cdot g$ for some integer
 $n$. This is showing that the isomorphism $z\colon
 G^{\natural}_{n}\xrightarrow{\sim} G^{\natural}_{n+d}$ where $d>0$
 denotes the degree of $z$ induces the bijection
 $A_{n}\xrightarrow{\sim}A_{n+d}$. Since we are assuming $A$ to be
 non-empty, there exists a {\em positive} integer $n_0$ such that
 $A_{n_0}$ is also non-empty.

 Now, let $dg$ be the Haar measure on $G_{\mb{R}}$, and $\mu_0$ be the
 product of $dg$ by the characteristic function of the elements of positive
 degree. We denote by $\mu^{\natural}_0$ the direct image of $\mu_0$ on
 $G^{\natural}_{\mb{R}}$. Since $A_{n_0}$ is open, we get
 \begin{equation*}
 \mu^{\natural}_0(A_{n_0})=dg(\alpha^{-1}(A_{n_0}))>0.
 \end{equation*}
 Thus the equidistribution theorem \cite[10.11]{Cr}\footnote{
 Although missing in {\it ibid.}, in this theorem, we think that we
 need to assume, moreover, $M$ to be semi-simple.}
 implies that there
 exists a positive integer $n$ such that $\mu^{\natural}(z^n\cdot
 A_{n_0})>0$. This is showing that $z^{-n}\cdot \mr{Frob}_x^{n'}\in
 A_{n_0}$ where $-nd+\deg(x)\,n'=n_0$. As a consequence, we have
 $\mr{Frob}_x^{n'}\in A$, which contradicts with the assumption.
\end{proof}

\section{$L$-factors and $\varepsilon$-factors}
Before proving the main theorem, let us review the theory of local
$L$-factors and $\varepsilon$-factors. Let $\psi$ be a non-trivial
additive character of $\mb{A}_{\mc{K}}/\mc{K}$, which is equivalent to
choosing a meromorphic differential form $\omega$ on $X$. For an
overconvergent $F$-isocrystal $E$ on a dense open subset $U/K$ of $X$,
we have the global $\varepsilon$-factor\footnote{
The definition is slightly different from \cite[7.2]{AM}.}
defined by
\begin{equation*}
 \varepsilon(E,t):=\prod_{r\in\mb{Z}}\det(-F\cdot t;
  H^{r}_{\mr{rig},c}(U,E))^{(-1)^{r+1}}
\end{equation*}
where $F$ denotes the Frobenius automorphism of the rigid cohomology
with compact support. For each closed point $x$ of $X$, we have local
$\varepsilon$-factor defined as follows. We define the (Artin) conductor
of $E$ at $x$ by
\begin{equation*}
 a_x(E_x):=
  \begin{cases}
   \mr{rk}(E)+\mr{irr}(E|_{\eta_x})&
   \mbox{if $x$ is ramified,}\\
   0&\mbox{if $x$ is unramified.}
  \end{cases}
\end{equation*}
Then we define the local $\varepsilon$-factor at $x$ to be
\begin{align*}
 \varepsilon_x(E_x,t,\psi_x)&:=
  \begin{cases}
   \varepsilon^{\mr{rig}}_0(E|_{\eta_x},\omega_x)\cdot c_x(t)
   &\mbox{if $x$ is ramified,}\\
   \bigl(q^{\deg(x)\,v_x(\omega)\,\mr{rk}(E)}\cdot
   \det_E(x)^{v_x(\omega)}\bigr)\cdot c_x(t)
   &\mbox{if $x$ is unramified,}
  \end{cases}\\
 c_x(t)&:=q^{-\deg(x)\,v_x(\omega)\,\mr{rk}(E)/2}\cdot
 t^{\deg(x)(\mr{rk}(E)\,v_x(\omega)+a_x(E))},
\end{align*}
where $\varepsilon^{\mr{rig}}_0(\cdot)$ denotes the $\varepsilon$-factor
defined by Marmora (cf.\ we followed the notation of \cite[7.1.3]{AM}),
$\det_M(x)$ denotes the determinant of the Frobenius action on $E$ at
$x$ (see [{\it ibid.}, 7.2.6]), $v_x(\omega)$ denotes the order of
$\omega$ at $x$. The main theorem of \cite{AM} (Theorem 7.2.6 of {\it
ibid.}) is stating that the following equality holds:
\begin{equation}
 \label{prodform}\tag{PF}
 \varepsilon(E,t)=\prod_{x\in|X|}\varepsilon_x(E_x,t,\psi_x).
\end{equation}

\begin{rem*}
 In \cite{AM}, only the case where the residue field of $K$ is equal to
 $k$ is dealt with. However, we are able to prove the product formula in
 the same way without assuming this,
 or more precisely [{\it ibid.}, (7.4.3.1)]
 holds. The detail is left to the reader. We note that the
 $\varepsilon$-factor in {\it ibid.}\ is a function. However, applying
 the product formula to the canonical extension, we see that the
 function is constant, and we define the $\varepsilon$-factor
 $\varepsilon^{\mr{rig}}_0(E|_{\eta_x},\omega_x)$ in the above to be
 this constant.
\end{rem*}
Let $\loc$ be a local field, and $\pi$ and $\pi'$ be smooth admissible
representations of $\mr{GL}_r(\loc)$ which are irreducible or of
Whittaker type. Fix an additive character $\psi_{\mr{loc}}$ of $\loc$.
Then in \cite[Thm 2.7 and subsection 9.4]{JPS}, the $L$-factor and
$\varepsilon$-factor for the pair $(\pi,\pi')$ are defined, and denoted
by $L(\pi\times\pi',t)$ and
$\varepsilon(\pi\times\pi',t,\psi_{\mr{loc}})$ respectively. For
automorphic representations $\pi=\bigotimes_{x\in|X|}\pi_x$ and
$\pi'=\bigotimes_{x\in|X|}\pi'_x$ such that $\pi_x$ and $\pi'_x$ are
irreducible or of Whittaker type for any $x$,
we denote by $L_x(\pi_x\times\pi'_x,t)$ (resp.\
$\varepsilon_x(\pi_x\times\pi'_x,t,\psi_x)$) the local $L$-factor
(resp.\ $\varepsilon$-factor) of the couple $(\pi_x,\pi'_x)$ (resp.\ and
the additive character $\psi_x$ induced by $\psi$) of representations of
$\mr{GL}_r(\mc{K}_x)$ where $\mc{K}_x$ denotes the local field at the
place $x$ of $\mc{K}$.

\section{Proof of the theorem}
We prove the main theorem in this section. First, we remark the
following.

\begin{rem}
 \label{remRPconj}
 If the map $\pi_\bullet$ is constructed for some $r$, the isocrystals
 belonging to $\mc{I}_r$ is $\iota$-pure of weight $0$. This can be seen
 from the generalized Ramanujan-Petersson conjecture proven by Lafforgue
 \cite[VI.10 (i)]{La}.
\end{rem}

Let us start to prove the main theorem. First let us see the uniqueness
of the maps. The map $\pi_{\bullet}$ is uniquely determined if it exists
by the strong multiplicity one theorem \cite{Pia}. For the uniqueness of
$E_{\bullet}$, assume we had two maps $E_{\bullet}$ and
$E'_{\bullet}$. For $\pi\in\mc{A}_r$, $E_{\pi}$ and $E'_\pi$ are
$\iota$-pure of weight $0$ by Remark \ref{remRPconj}, and we are in the
situation to apply the proposition of \S\ref{Cebotarev}, which implies
that $E_\pi=E'_\pi$. Thus the uniqueness of $E_{\bullet}$ follows.

The rest of the argument is nothing but the ``{\it principe de
r\'{e}currence}'' using the product formula (\ref{prodform}). Since all
we need to do is to copy the proof of \cite[VI.11]{La}, we only sketch
the proof. We use the induction on $r$. We claim the following.

\begin{cl}
 \label{indDeligh}
 Assume that the correspondence is established for any $r$ which is
 strictly less than $r_0$, and the local $L$-factor and
 $\varepsilon$-factor coincide {\em for any place of $X$} via this
 correspondence. Then we have the map
 $\pi_{\bullet}\colon\mc{I}_r\rightarrow\mc{A}_r$ in the sense of
 Langlands for $r=r_0$.
\end{cl}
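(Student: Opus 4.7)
The plan is to implement the converse-theorem argument of Cogdell-Piatetski-Shapiro in the form used by Laumon-Rapoport-Stuhler and by Lafforgue (\cite[VI.11]{La}), with overconvergent $F$-isocrystals in place of $\ell$-adic sheaves and the product formula (\ref{prodform}) in place of its $\ell$-adic analogue. Fix $E\in\mc{I}_{r_0}$. For each $x\in|X|$, the local data of $E$ at $x$ (Frobenius eigenvalues at unramified places, the differential module with Frobenius structure $E|_{\eta_x}$ at ramified places) determine, via the local Langlands correspondence for $\mr{GL}_{r_0}(\mc{K}_x)$, an irreducible smooth representation $\pi_{E,x}$. Form the restricted tensor product $\pi_E:=\bigotimes_{x\in|X|}\pi_{E,x}$; the goal is to show that $\pi_E\in\mc{A}_{r_0}$ and then to set $\pi_\bullet(E):=\pi_E$. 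By construction the unramified places of $E$ and $\pi_E$ coincide, and the Frobenius and Hecke eigenvalues agree at every unramified point.

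To apply the converse theorem, I would verify, for every integer $r'$ with $1\leq r'\leq r_0-2$ (or $r'\leq r_0-1$ after twisting by sufficiently ramified idele class characters of $\mc{K}$) and every $\pi'\in\mc{A}_{r'}$, that the global $L$-function $L(\pi_E\times\pi',t)$ is a Laurent polynomial in $t$ satisfying the expected functional equation with $\varepsilon(\pi_E\times\pi',t,\psi)$. By the induction hypothesis, $\pi'=\pi_{E'}$ for a unique $E'\in\mc{I}_{r'}$, and at every place of $X$ the local $L$- and $\varepsilon$-factors of $\pi_{E',x}$ coincide with those of $E'$. Combined with the compatibility of the local Langlands correspondence with factors of pairs, applied to $\pi_{E,x}$, this yields a place-by-place identification of $L_x(\pi_E\times\pi',t)$ and $\varepsilon_x(\pi_E\times\pi',t,\psi_x)$ with $L_x((E\otimes E')_x,t)$ and $\varepsilon_x((E\otimes E')_x,t,\psi_x)$ respectively. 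Multiplying over $x$, it therefore suffices to establish the desired properties of the global $L$- and $\varepsilon$-functions attached to the overconvergent $F$-isocrystal $E\otimes E'$.

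These are consequences of rigid cohomology together with the product formula. The cohomological expression $\prod_r\det(1-F\cdot t;H^r_{\mr{rig},c}(U,E\otimes E'))^{(-1)^{r+1}}$ for the global $L$-function, together with Poincar\'e duality for rigid cohomology and the application of (\ref{prodform}) to $E\otimes E'$, immediately delivers a functional equation matching the sought-after one. Polynomiality reduces to the vanishing of $H^0_{\mr{rig},c}$ and $H^2_{\mr{rig},c}$ of $E\otimes E'$ on the common open of unramification, which follows from the absolute irreducibility of $E$ together with $\mr{rk}(E')<\mr{rk}(E)$: no nonzero horizontal map $E\to E'$ or $E'\to E$ can exist, ruling out an occurrence of the trivial isocrystal as a sub- or quotient of $E\otimes E'^\vee$.

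The main obstacle, as in the $\ell$-adic case, is not any single analytic step but the orchestration of converse theorem, twisting, and strong multiplicity one needed to upgrade the output into a genuine element of $\mc{A}_{r_0}$: one must verify that the resulting automorphic representation is cuspidal (rather than Eisenstein or residual), that it has central character of finite order (which follows from the finiteness of $\det(E)$), and that the cuspidal automorphic representation produced truly has the local components $\pi_{E,x}$ at every place, not merely at almost all of them. This last point invokes Piatetski-Shapiro's strong multiplicity one theorem together with further twisting arguments, exactly as in \cite[VI.11]{La}. Apart from these now-standard bookkeeping steps, the entire construction reduces to (\ref{prodform}) and the induction hypothesis.
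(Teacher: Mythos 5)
Your overall architecture (converse theorem plus the product formula (\ref{prodform}) and the induction hypothesis) is the same as the paper's, but two points need attention, and the second is a genuine gap. First, at the ramified places you define $\pi_{E,x}$ via ``the local Langlands correspondence'' applied to $E|_{\eta_x}$ and then invoke its compatibility with $L$- and $\varepsilon$-factors of pairs to identify $L_x(\pi_E\times\pi',t)$ with $L_x((E\otimes E')_x,t)$ at \emph{every} place. No such compatibility is available in the $p$-adic setting at this stage of the induction (it is essentially the content of the later Claim \ref{coinLE}, proved only \emph{after} the correspondence in rank $r_0$ exists, using \cite{Ke} or \cite{AC}). The paper avoids this: at $x\in S$ it takes an \emph{arbitrary} representation of Whittaker type whose central character matches $\det(E_x)$ via reciprocity, and uses the hypothesis of the claim (coincidence of local factors in ranks $<r_0$) only at the places where $\pi$ is \emph{unramified}; the bad places are handled as in Lafforgue's argument, not by a local correspondence for isocrystals.

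The more serious omission is the cuspidality of $\pi_E$ when $S\neq\emptyset$, which you dismiss as ``standard bookkeeping \ldots exactly as in \cite[VI.11]{La}.'' In the $\ell$-adic case that step rests on the \v{C}ebotarev density theorem, which has no direct analogue for overconvergent $F$-isocrystals, since they are not representations of $\mr{Gal}(\overline{\mc{K}}/\mc{K})$; this is precisely where the paper's new input enters. The paper argues: if $\pi_E$ were not cuspidal, a result of Langlands yields a nontrivial partition $r_0=r_1+\dots+r_k$ and cuspidal $\pi^1,\dots,\pi^k$ with finite-order central characters, unramified outside $S$, whose Hecke eigenvalues at $x\notin S$ exhaust those of $\pi_E$; by the induction hypothesis one gets $E':=\bigoplus_i E_{\pi^i}$, which is $\iota$-pure of weight $0$ by the Ramanujan--Petersson input (Remark \ref{remRPconj}), hence so is $E$, and then the Proposition of \S\ref{Cebotarev} (the $p$-adic substitute for \v{C}ebotarev, valid for $\iota$-mixed isocrystals) forces $E^{\mr{ss}}\cong E'^{\mr{ss}}$, contradicting the irreducibility of $E$. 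Without this (or some replacement for \v{C}ebotarev density), your proof of the claim is incomplete at its decisive step; the rest of your outline (cohomological expression of the global $L$-function, Poincar\'e duality, vanishing of $H^0_{\mr{rig},c}$ and $H^2_{\mr{rig},c}$ from irreducibility and the rank inequality) is consistent with the paper's sketch.
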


\begin{proof}[Sketch of the proof]
 Take an element $E$ of $\mc{I}_{r_0}$, and denote by $S$ the set of
 points of $X$ at which $E$ is
 ramified. For a point $x\not\in S$, we put $\pi_x$ to be the unramified
 smooth admissible irreducible representation of
 $\mr{GL}_{r_0}(\mc{K}_x)$ whose set of Hecke eigenvalues is that of
 Frobenius eigenvalues of $E$ at $x$. For $x\in S$, we put $\pi_x$ to be
 an irreducible representation of $\mr{GL}_{r_0}(\mc{K}_x)$ of Whittaker
 type whose center corresponds to $\det(E_x)$ via the reciprocity
 map. We put $\pi:=\bigotimes_{x\in|X|}\pi_x$,
 which is a smooth admissible irreducible representation of
 $\mr{GL}_{r_0}(\mb{A}_{\mc{K}})$. First, we need to show that there is
 an irreducible automorphic representation $\pi_E$ such that the local
 factors of $\pi_E$ at any closed point $x\not\in S$ is equal to
 $\pi_x$. For this, we
 use the converse theorem \cite[B.13]{La} of Piatetski-Shapiro. Since
 the argument is exactly the same as \cite{La} using the product formula
 (\ref{prodform}), we omit the detail. We only note here that on a way
 we check the hypothesis of the converse theorem, we need to show that
 for any closed point $x$ of $X$, the local factors coincide for certain
 pairs. When $\pi$ is {\em unramified} at $x$, we use the hypothesis on
 the coincidence of local factors in the statement of the claim to see
 this coincidence at $x$. As a result, we have an automorphic
 representation $\pi_E$ with desired property, which is moreover
 cuspidal if $S=\emptyset$.

 Finally, to show the claim, it remains to show that the
 automorphic representation is in fact cuspidal when
 $S\neq\emptyset$. Assume that $\pi_{E}$ were not cuspidal. Then a
 result of Langlands is saying that there exists a non-trivial partition
 $r_0=r_1+\dots+r_k$, and automorphic cuspidal representations
 $\pi^1,\dots,\pi^{k}$ of $\mr{GL}_{r_1},\dots\mr{GL}_{r_k}$ which are
 unramified outside of $S$, the central characters are of finite order,
 and the Hecke eigenvalues at $x\not\in S$ of $\pi$ is the disjoint
 union of that of $\pi^{i}$. By induction
 hypothesis, we have the overconvergent $F$-isocrystal $E_{\pi^i}$ of rank
 $r_i$. By construction, the Frobenius eigenvalue of $E$ and the
 semi-stable overconvergent $F$-isocrystal
 $E':=\bigoplus_{i=1}^kE_{\pi_i}$ are the same for any point outside of
 $S$. We know that $E'$ is $\iota$-pure of weight $0$. This shows that
 $E$ is $\iota$-pure of weight $0$ as well, and by applying Proposition
 \ref{p-adicCebo}, we have $E\cong E'$, which is a contradiction, and we
 conclude the proof of the claim.
\end{proof}

\begin{cl}
 \label{coinLE}
 Assume $\pi\in\mc{A}_r$ (resp.\ $\pi'\in\mc{A}_{r'}$) correspond in the
 sense of Langlands to $E\in\mc{I}_r$ (resp.\ $E'\in\mc{I}_{r'}$). Then
 we get
 \begin{equation*}
  L_x(\pi_x\times\pi'_x,t)=L_x(E_x\otimes E'_x,t),\qquad
   \varepsilon_x(\pi_x\times\pi'_x,t,\psi_x)=\varepsilon_x
   (E_x\otimes E'_x,t,\psi_x),
 \end{equation*}
 for any $x\in|X|$.
\end{cl}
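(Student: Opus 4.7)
My plan is to follow the \emph{principe de r\'{e}currence} of \cite[VI.11]{La}, adapted to the $p$-adic setting via the product formula (PF). I would split the places into unramified and ramified ones. At an unramified $x \notin S_E \cup S_{E'}$, both $\pi_x$ and $\pi'_x$ are unramified principal series whose Satake parameters are the Frobenius eigenvalues of $E_x$ and $E'_x$ by hypothesis; the local $L$- and $\varepsilon$-factors on both sides are then the same explicit rational functions of these eigenvalues (the unramified formula for $\varepsilon_x$ given in Section 4 extends bilinearly to tensor products and matches the Rankin--Selberg recipe), so equality is immediate.

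For ramified places, I would combine (PF) applied to $E \otimes E'$ with the global Jacquet--Piatetski-Shapiro--Shalika functional equation for $L(\pi \times \pi', t)$. Since $(\pi, E)$ and $(\pi', E')$ correspond, the two global $L$-functions and $\varepsilon$-factors coincide, and together with the unramified matching above this yields the product identity
\begin{equation*}
\prod_{x \in S} \frac{\varepsilon_x(E_x \otimes E'_x, t, \psi_x)}{\varepsilon_x(\pi_x \times \pi'_x, t, \psi_x)} = 1, \qquad S := S_E \cup S_{E'}.
\end{equation*}
To isolate an individual ramified place $x_0 \in S$, I would invoke Henniart's twisting trick: choose a Hecke character $\chi$ of $\mc{K}^\times \backslash \mb{A}_{\mc{K}}^\times$ that is unramified outside $x_0$ and of arbitrarily large conductor at $x_0$. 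Under the $r=1$ case of Langlands (i.e.\ class field theory, which is the base of the induction), $\chi$ corresponds to a rank-one overconvergent $F$-isocrystal $L_\chi$, and $(\pi \otimes \chi, E \otimes L_\chi)$ again satisfies the hypothesis of the claim. Applying the product identity to this twist, the Deligne--Henniart stability principle on the automorphic side, together with its $p$-adic analogue for Marmora's rigid $\varepsilon$-factor, reduces the local ratio at $x_0$ to one depending on $E_{x_0}$ (resp.\ $\pi_{x_0}$) only through rank and determinant — both of which are already matched. Varying $\chi$ and taking sufficiently deep twists forces equality of $\varepsilon$-factors at $x_0$; the $L$-factor equality then follows via the standard relation $\gamma = \varepsilon \cdot L(\check{\cdot})/L(\cdot)$ and the fact that both $L$-factors are reciprocals of polynomials of bounded degree.

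The main obstacle will be establishing the $p$-adic Henniart stability statement for Marmora's $\varepsilon_0^{\mr{rig}}$: that for $\chi$ of sufficiently large conductor at $x_0$, $\varepsilon_0^{\mr{rig}}(M \otimes L_\chi, \omega_{x_0})$ depends on the local differential module $M := (E \otimes E')|_{\eta_{x_0}}$ only through $\mr{rk}(M)$ and $\det(M)$. This is a purely local assertion about the construction of \cite{AM} and should be extracted from it; once it is in hand, the remaining global and inductive bookkeeping imports from \cite[VI.11]{La} with only notational changes.
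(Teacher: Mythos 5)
Your strategy diverges from the paper's at exactly the crucial ramified places, and as written it has genuine gaps there. The paper disposes of Claim \ref{coinLE} by importing the proof of \cite[Prop.\ VI.11 (ii)]{La} essentially verbatim: the engine of that proof is a weight argument --- $\pi_x$ is tempered at every place (generalized Ramanujan--Petersson, \cite[VI.10 (i)]{La}) and $E$, $E'$ are $\iota$-pure of weight $0$, with $p$-adic Weil II (\cite{Ke} or \cite{AC}) replacing \cite[VI.5]{La} --- which first identifies the ramified local $L$-factors inside the global $L$-function (purity/temperedness rules out zero--pole cancellation between weight ranges), and only afterwards compares the two functional equations to deal with the $\varepsilon$-factors. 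Your unramified computation is fine, but you never invoke temperedness or purity at all; instead you route the ramified $\varepsilon$-factors through a Deligne--Henniart stability argument with highly ramified twists, which is not what the paper (or Lafforgue's VI.11 (ii)) does.

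Two concrete problems with that route. First, the product identity $\prod_{x\in S}\varepsilon_x(E_x\otimes E'_x,t,\psi_x)/\varepsilon_x(\pi_x\times\pi'_x,t,\psi_x)=1$ that you take as a starting point presupposes that the two global $L$-functions literally coincide, i.e.\ that the ramified local $L$-factors already match --- which is part of what is being proved; and your plan to recover the $L$-factor equality afterwards from $\gamma=\varepsilon\cdot L^{\vee}/L$ is circular in this ordering, since recovering $L$ from $\gamma$ needs precisely the no-cancellation input that temperedness and $\iota$-purity provide in the paper's argument. Second, the ``$p$-adic Henniart stability'' for Marmora's $\varepsilon^{\mr{rig}}_0$ under twists of large conductor is not proved in \cite{AM}, and it is not a routine extraction: it is a nontrivial local assertion about differential modules over the Robba ring with Frobenius structure, and your whole treatment of the ramified places stands or falls with it. The paper's proof is arranged exactly so as never to need such a stability theorem. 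If you want to keep your route, you must actually prove the stability statement (for instance by reducing Marmora's factors to classical local constants of associated Weil--Deligne representations and quoting classical stability, in the spirit of \cite{Delconst}) and repair the ordering of the $L$-factor step; otherwise the efficient fix is to switch to the weight-separation argument of \cite[VI.11 (ii)]{La}, with \cite{Ke} or \cite{AC} supplying the $p$-adic Weil II input, as the paper does.
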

\begin{proof}
 The proof is exactly the same as that of \cite[Prop VI.11
 (ii)]{La}. The assumption is slightly milder than {\it ibid.}\ since we
 already know that the generalized Ramanujan-Petersson conjecture
 [{\it ibid.}, VI.10 (i)] is true. In particular, $\pi_x$ is tempered for
 any $x\in|X|$, and $E$ and $E'$ are $\iota$-pure of weight $0$. In the
 proof, we need to replace [{\em ibid}., VI.5] by \cite{Ke} or
 \cite{AC}.
\end{proof}

Let us prove the theorem. Assume that the correspondence is established
for $r$ strictly less than $r_0$. Then Claim \ref{coinLE} is showing
that the local $L$-factors and $\varepsilon$-factors coincide via the
correspondence at any closed point of $X$. This enables us to apply
Claim \ref{indDeligh}, which give us the map $\pi_\bullet$ for
$r=r_0$.

It remains to construct $E_{\bullet}$. Since we are assuming Conjecture
(D), for $\pi\in\mc{A}_{r_0}$ with the set of unramified places $U$,
there exists an overconvergent $F$-isocrystal $E$ of rank $r_0$ whose
set of Frobenius eigenvalues at $x\in U$ is equal to that of Hecke
eigenvalues of $\pi$ at $x$. We need to prove that $E$ is
irreducible. Assume $E$ were not irreducible, and write
$E^{\mr{ss}}\cong\bigoplus E_i$ where $E_i$ are irreducible. By Lemma
\ref{twistdetfin}\footnote{
We do not have circular reasoning.},
there exists a twist $\chi_i$ such that $E_i(\chi_i)$ is of finite
determinant for any $i$. Take a
prime $\ell\neq p$, and let $\ms{F}_i$ (resp.\ $\ms{F}$) be
the irreducible $\ell$-adic sheaf corresponding to $\pi_{E_i(\chi_i)}$
(resp.\ $\pi$) in the sense of Langlands using the induction hypothesis
and proven Langlands correspondence for $\ell$-adic sheaves. Then the set of
Frobenius eigenvalues of $\ms{F}$ and $\bigoplus\ms{F}_i(\chi_i^{-1})$
(using the notation of Remark \ref{remtwist} (ii)) coincide at any
$x\in U$. By the \v{C}ebotarev density theorem, this is
not possible, which implies that $E$ is irreducible. We see from the
Frobenius eigenvalues at each closed point in $U$ that $E$ is of finite
determinant, and thus the theorem follows.

\section{Some consequences}
Finally, let us see some consequences of the theorem.

A {\em twist} is an $F$-isocrystal of rank $1$ on $\mr{Spec}(k)$. Let
$\chi$ be a twist. For an overconvergent $F$-isocrystal $E$, we denote
by $E(\chi)$ the tensor product $E\otimes f^*(\chi)$ where $f\colon
X\rightarrow\mr{Spec}(k)$ is the structural morphism.

\begin{lem}
 \label{twistdetfin}
 Let $X$ be a scheme of finite type over $k$.

 (i) Let $E$ be an overconvergent $F$-isocrystal on $X/K$ of rank
 $1$. Then there exists a twist $\chi$ and a positive integer $n$ such
 that $E(\chi)^{\otimes n}$ is trivial.

 (ii) For any overconvergent $F$-isocrystal $E$ on $X/K$, by taking an
 extension of $K$ if necessary, there exists a twist $\chi$ such that
 $E(\chi)$ is of finite determinant.
\end{lem}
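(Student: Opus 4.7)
The plan is to derive (ii) from (i) via a determinant-and-root-extraction argument, and to prove (i) by twisting $E$ to have trivial Frobenius at a chosen closed point and then appealing to the known classification of rank-one overconvergent $F$-isocrystals.

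For (ii), let $r:=\mr{rk}(E)$ and $L:=\det(E)$, so that $\det(E(\chi))=L\otimes\chi^{\otimes r}$ for any twist $\chi$ (using that $\det(V\otimes\mathcal{L})=\det(V)\otimes\mathcal{L}^{\otimes r}$ for a line object $\mathcal{L}$). Applying (i) to the rank-one object $L$ produces, after enlarging $K$, a twist $\chi_0$ and a positive integer $n$ with $L^{\otimes n}\cong\chi_0^{\otimes -n}$. Enlarging $K$ once more so that the Frobenius eigenvalue of $\chi_0$ admits an $r$-th root $\beta$, we take $\chi$ to be the twist with Frobenius eigenvalue $\beta$; then $\chi^{\otimes r}\cong\chi_0$, whence $\det(E(\chi))^{\otimes n}=L^{\otimes n}\otimes\chi_0^{\otimes n}$ is trivial, so $E(\chi)$ is of finite determinant.

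For (i), we may assume $X$ is connected. Fix a closed point $x_0\in X$ and let $\alpha\in\overline{\mb{Q}}_p^\times$ be the Frobenius eigenvalue of $E$ at $x_0$. After enlarging $K$ to contain a $\deg(x_0)$-th root of $\alpha^{-1}$, there is a twist $\chi_1$ making the Frobenius eigenvalue of $E\otimes\chi_1$ at $x_0$ equal to $1$. Replacing $E$ by $E\otimes\chi_1$, the problem reduces to the following assertion: a rank-one overconvergent $F$-isocrystal on $X$ whose Frobenius acts trivially at $x_0$ is torsion in the tensor group of rank-one overconvergent $F$-isocrystals on $X/K$.

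The main obstacle is this residual torsion assertion, which I plan to settle via the classification of rank-one overconvergent $F$-isocrystals on a smooth variety in terms of continuous $\overline{\mb{Q}}_p^\times$-valued characters of the arithmetic fundamental group $\pi_1(X,\bar x_0)$ (cf.\ \cite{CrMono} and the references therein on unit-root $F$-isocrystals). Under this classification, triviality of Frobenius at $x_0$ kills the Weil/Frobenius contribution, leaving a character of the geometric fundamental group $\pi_1(X_{\bar k})^{\mr{ab}}$ that is equivariant under arithmetic Frobenius; compactness of the image in $\overline{\mb{Q}}_p^\times$, combined with the Frobenius-equivariance (which, as in the $\ell$-adic analogue, confines the character to a finite group of Frobenius-fixed continuous characters), forces finite order, so $E^{\otimes n}$ is trivial for some $n$.
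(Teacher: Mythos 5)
Your part~(ii) is essentially the paper's argument (the identity $\det(E(\chi))\cong\det(E)(\chi^{\otimes\mathrm{rk}(E)})$ plus a root extraction after extending $K$), and the general strategy for~(i) --- twist to reduce to the unit-root case and then invoke the classification of rank-one unit-root objects by $\pi_1$-characters --- matches the paper's. But the crucial step is hand-waved. ``Compactness of the image, combined with Frobenius-equivariance, forces finite order'' is not a formal consequence of these two properties: the geometric abelianized $\pi_1$ of a variety in characteristic $p$ has an enormous pro-$p$ part (wild ramification), so one genuinely needs a finiteness theorem in geometric class field theory. The paper cites exactly this: Katz--Lang \cite[Theorem 2]{KL}. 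Your parenthetical ``as in the $\ell$-adic analogue'' acknowledges that something is needed, but does not identify it, and the burden of the proof is precisely in this step.

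Two further omissions. First, passing from a rank-one \emph{overconvergent} unit-root $F$-isocrystal to a character of $\pi_1(X)$ is not automatic: it requires controlling the local monodromy at the boundary, which the paper does via Shiho's purity theorem \cite[Thm 4.3]{Sh} and the Tsuzuki-type argument from \cite[4.13]{Crrep}, after first reducing to the case where $X$ admits a smooth compactification with SNC complement. Second, an arbitrary scheme of finite type over $k$ need not be smooth, nor admit such a compactification even after shrinking; the paper handles this by a de Jong alteration $f\colon Y\to X$ and then a descent argument (if $G$ becomes trivial after pullback along a degree-$d$ finite \'etale map then $G^{\otimes d}$ is trivial). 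Your sketch only treats the case of a smooth $X$ with a good compactification implicitly, so as written the claim is not established in the generality of the statement.
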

\begin{proof}
 Let us see (i). We may replace $k$ by its finite extension. Thus
 we may assume that the residue field of $K$ is $k$ and there is a
 uniformizer of $K$ fixed by the Frobenius automorphism of $K$. By the
 definition of overconvergent $F$-isocrystals, we may
 assume that $X$ is reduced. By \cite[2.1.11]{Be}, we may shrink $X$,
 and in particular, we may assume that $X$ is smooth.
 Since $E$ is of rank $1$, there exists a twist $\chi'$ such that
 $E(\chi')$ is unit-root. Assume there exists a smooth compactification
 $X\hookrightarrow\overline{X}$ such that the complement is a simple
 normal crossing divisor. Then by the same argument as
 \cite[4.13]{Crrep}, using \cite[Thm 4.3]{Sh} and \cite[Theorem 2]{KL}
 instead of \cite[4.12]{Crrep} and the class field theory, we get that
 (i) holds in this case.

 There exists a generically \'{e}tale alteration $f\colon Y\rightarrow
 X$ such that $Y$ possesses a smooth compactification whose complement
 is a simple normal crossing divisor. This shows that there exists an
 integer $n'$ and a twist $\chi$ such that $f^*E(\chi)^{\otimes n'}$ is
 trivial. There exists open dense subschemes $V$ of $Y$ and $U$ of $X$
 such that $f_V\colon V\rightarrow U$ is finite \'{e}tale of degree
 $d$. Let $G$ be an $F$-isocrystal $U$ such that $f_V^*G$ is
 trivial. Then $G^{\otimes d}$ is trivial. This implies that
 $E(\chi)^{\otimes dn'}|U$ is trivial, and by using \cite[2.1.11]{Be}
 again, we conclude that $E(\chi)^{\otimes dn'}$ is trivial on $X$.

 For (ii), we only need to note that
 $\det(E(\chi))\cong\det(E)(\chi^{\otimes\mr{rk}(E)})$.
\end{proof}

\begin{rem}
 \label{remtwist}
 (i) The lemma also holds when $k$ is the perfection of an absolutely
 finitely generated field, since the theorem of \cite{KL} is still
 applicable in this case.

 (ii) Fixing a twist is equivalent to fixing an element of
 $K^\times$. Fix an isomorphism $\iota'\colon\overline{\mb{Q}}_p\cong
 \overline{\mb{Q}}_{\ell}$, and take a twist $\chi$ corresponding to
 $b\in\overline{\mb{Q}}_p$. Given a $\overline{\mb{Q}}_\ell$-sheaf
 $\ms{F}$, we have $\ms{F}^{\iota'(b)}$ using the notation of
 \cite[1.2.7]{WeilII}. We sometimes denote this sheaf by
 $\ms{F}(\chi)$.
\end{rem}

We say that a scheme $X$ over $k$ is a {\em d-variety} if there exists a
proper smooth formal scheme $\ms{P}$, a divisor $Z$ of the special
fiber of $\ms{P}$, and an embedding $X\hookrightarrow\ms{P}$ such that
$X=\overline{X}\setminus Z$ where $\overline{X}$ denotes the closure of
$X$ in $\ms{P}$. We are able to consider overholonomic
$F$-$\DdagQ{X}$-complexes as in \cite{AC}. We say that an overholonomic
$F$-$\DdagQ{X}$-complex $\ms{C}$ is {\em $\iota$-mixed} if there exists
a finite stratification $\{X_i\}$ by smooth d-varieties such that
$\ms{H}^j\mb{R}\underline{\Gamma}^\dag_{X_i}(\ms{C})$ is the
specialization of an $\iota$-mixed overconvergent $F$-isocrystal for any
$i$ and $j$. See \cite{AC} for more details.

\begin{cor}
 \label{ocisocimixed}
 Assume Conjecture {\normalfont (D)} holds for any function field. Then
 for any $d$-variety $X$ of finite type over $k$, any
 overholonomic $F$-$\DdagQ{X}$-complex is $\iota$-mixed.
\end{cor}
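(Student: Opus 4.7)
The plan is to reduce the statement to the case of an absolutely irreducible overconvergent $F$-isocrystal on a smooth curve, where the main theorem of this paper combined with Lafforgue's Ramanujan--Petersson theorem yields $\iota$-purity.

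First, by the standard devissage for overholonomic $F$-$\DdagQ{X}$-complexes (see \cite{AC}), any such $\ms{C}$ admits a finite stratification of $X$ by smooth d-varieties $\{X_i\}$ on which each $\ms{H}^j\mb{R}\underline{\Gamma}^\dag_{X_i}(\ms{C})$ is the specialization of an overconvergent $F$-isocrystal. Hence it suffices to show that every overconvergent $F$-isocrystal on a smooth d-variety $Y/k$ is $\iota$-mixed. Since the class of $\iota$-mixed $F$-isocrystals is closed under subquotients and extensions, we may pass to the semi-simplification and assume $E$ is absolutely irreducible, the goal being to show $E$ is $\iota$-pure of some weight. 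By Lemma \ref{twistdetfin}(ii), after extending $K$ if necessary and twisting $E$ by a rank-one $F$-isocrystal on $\mr{Spec}(k)$ (which shifts all Frobenius weights by a fixed real constant, hence preserves purity), we may further assume $E$ is of finite determinant.

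Second, fix a closed point $y \in |Y|$. Using a Bertini-type irreducibility-preservation result for overconvergent $F$-isocrystals (\cite{AC}), choose a smooth curve $C \subset Y$ passing through $y$ along which $E|_C$ remains absolutely irreducible and of finite determinant, so $E|_C \in \mc{I}_{\mr{rk}(E)}(C)$. By the main theorem, Conjecture (D) applied to the function field of $C$ implies Conjecture (L) on $C$, so $E|_C$ corresponds under $\pi_{\bullet}$ to an irreducible cuspidal automorphic representation $\pi$ on $\mr{GL}_{\mr{rk}(E)}$. Lafforgue's generalized Ramanujan--Petersson theorem \cite[VI.10(i)]{La} (cf.\ Remark \ref{remRPconj}) then implies that $\pi$ is tempered at every place, which translates into $E|_C$ being $\iota$-pure of weight $0$. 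In particular, the Frobenius eigenvalues of $E$ at $y$ all have complex absolute value $1$; letting $y$ range over $|Y|$ proves that $E$ itself is $\iota$-pure of weight $0$.

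The main obstacle is the Bertini-style curve restriction: one must produce, through any prescribed closed point of $Y$, a smooth curve along which the absolutely irreducible overconvergent $F$-isocrystal $E$ restricts to an absolutely irreducible object of $\mc{I}_{\mr{rk}(E)}(C)$. The devissage of overholonomic $F$-$\DdagQ{X}$-complexes into strata with $F$-isocrystalic cohomology sheaves is comparatively routine once one invokes the general theory developed in \cite{AC}, and once the curve-restriction step is granted, the main theorem and Lafforgue's result transport temperedness on the automorphic side directly into $\iota$-purity on the $p$-adic side.
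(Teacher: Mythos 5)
Your proposal follows essentially the same route as the paper: reduce to an irreducible overconvergent $F$-isocrystal on a smooth d-variety, twist it into finite determinant via Lemma \ref{twistdetfin} (ii), restrict to a smooth curve through a given closed point, and invoke the main theorem (Conjecture (L) on that curve) together with Remark \ref{remRPconj}, i.e.\ Lafforgue's Ramanujan--Petersson bound, to get $\iota$-purity of weight $0$ and hence $\iota$-mixedness of everything. The one place where you diverge is the curve-restriction step: the paper simply takes any smooth curve $C$ through $x$ and asserts, via Remark \ref{remRPconj} and Conjecture (L), that the restriction is $\iota$-pure of weight $0$, whereas you demand a Bertini-type statement (attributed to \cite{AC}) guaranteeing that $E|_C$ remains absolutely irreducible, so that it actually lies in $\mc{I}_{\mr{rk}(E)}(C)$. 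This is a refinement rather than a different proof, and arguably a necessary one: Remark \ref{remRPconj} only applies to objects of $\mc{I}_r$, and the restriction of an irreducible isocrystal to an arbitrary curve need not stay irreducible (nor need the irreducible constituents of the restriction have finite determinant, so one only gets purity of each constituent of some unknown weight), a point the paper's one-line argument does not address. Your write-up makes this dependence explicit, at the price of resting the key step on an irreducibility-preserving Lefschetz/Bertini result quoted from a paper in preparation rather than on anything proved here.
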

\begin{proof}
 By definition, it suffices to show that, for any {\em smooth} d-variety
 $X$, any irreducible overconvergent $F$-isocrystal $E$ on $X$ is
 $\iota$-pure. By Lemma \ref{twistdetfin} (ii), we may assume that $E$
 is of finite determinant. For any closed point $x$ of $X$, there exists
 a smooth curve $i\colon C\hookrightarrow X$ passing through $x$. It
 suffices to see that $i^*E$ is $\iota$-pure of weight $0$. Using Remark
 \ref{remRPconj} and Conjecture (L), any overconvergent $F$-isocrystal
 with finite determinant on a smooth curve is $\iota$-pure of weight
 $0$, which shows that $i^*E$ is $\iota$-pure of weight $0$, and
 conclude the proof.
\end{proof}

\begin{lem}
 The Langlands correspondence is established for $r=1$.
\end{lem}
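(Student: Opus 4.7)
The plan is to reduce the $r=1$ case to a combination of $p$-adic unit-root theory and classical global class field theory for the function field $\mc{K}$. In rank $1$, ``finite determinant'' is the same as ``finite order'', so $\mc{I}_1$ is the set of rank $1$ overconvergent $F$-isocrystals of finite order on open dense subschemes of $X$, while $\mc{A}_1$ is the set of continuous Hecke characters of $\mb{A}_{\mc{K}}^\times/\mc{K}^\times$ of finite order.

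First I would observe that any $E\in\mc{I}_1$ is automatically unit-root, since all its Frobenius eigenvalues at closed points are roots of unity. As in the proof of Lemma \ref{twistdetfin}, after pulling back along a generically \'etale alteration to a smooth compactification with simple normal crossings boundary and applying \cite[Theorem 2]{KL} together with \cite[Thm 4.3]{Sh}, the isocrystal $E$ corresponds to a continuous character $\chi_E\colon \pi_1^{\mr{ab}}(U)\rightarrow \overline{\mb{Q}}_p^\times$ of finite order, in a way that is functorial in $U$ and preserves characteristic polynomials of Frobenius at every closed point of $U$. Global class field theory for $\mc{K}$ then attaches to $\chi_E$ a Hecke character $\pi_E$ of finite order, unramified precisely at the unramified places of $E$, with the local Hecke eigenvalue at each such place matching the Frobenius eigenvalue of $E$. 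This defines $\pi_\bullet\colon\mc{I}_1\rightarrow\mc{A}_1$; reversing the dictionary (class field theory, then unit-root theory) produces $E_\bullet\colon\mc{A}_1\rightarrow\mc{I}_1$, and the two constructions are manifestly inverse to one another.

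For uniqueness, the map $\pi_\bullet$ is determined by its values at almost all places, which is the strong multiplicity one property, trivial in rank $1$. For $E_\bullet$, any object of $\mc{I}_1$ is tautologically $\iota$-pure of weight $0$ and in particular $\iota$-mixed, so Proposition \ref{p-adicCebo} applies: two rank $1$ isocrystals with matching Frobenius characteristic polynomials at every $x\in|U|$ agree after semisimplification, and since rank $1$ objects are already simple the agreement is literal.

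The main obstacle is identifying rank $1$ overconvergent unit-root $F$-isocrystals on an arbitrary open smooth curve with finite-order characters of $\pi_1^{\mr{ab}}(U)$; but this input has already been assembled and used in the proof of Lemma \ref{twistdetfin}, so no genuinely new ingredient is required, only the patient translation through the two layers of reciprocity.
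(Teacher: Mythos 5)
Your overall strategy — identify rank $1$ objects in $\mc{I}_1$ as unit-root, translate to characters of $\pi_1^{\mr{ab}}(U)$, then pass through global class field theory — is the same as the paper's, which simply writes ``Using the reciprocity map, we have a representation of $\pi_1(U)$ with finite monodromy at the boundary, which induces a unit-root overconvergent $F$-isocrystal by [Ts, 7.1.1]. To construct $\pi_\bullet$, we note that objects in $\mc{I}_1$ are unit-root since they are finite. Thus using the result of Tsuzuki and the reciprocity map, we conclude.''

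However, there is a genuine misattribution at the crux of your argument. You invoke a generically \'etale alteration together with \cite[Theorem 2]{KL} and \cite[Thm 4.3]{Sh} to ``get'' the equivalence between rank $1$ unit-root overconvergent $F$-isocrystals on $U$ and finite-order characters of $\pi_1^{\mr{ab}}(U)$. Those two results do not furnish that equivalence: in the proof of Lemma \ref{twistdetfin} they are used (inside Crew's rank-one argument) to prove a \emph{finiteness} statement — that a suitable twist of a rank $1$ object has a trivial tensor power — after the dictionary with $\pi_1$-characters has already been assumed. The dictionary itself, in the overconvergent setting, is exactly Tsuzuki's finite monodromy theorem \cite[7.1.1]{Ts}: unit-root overconvergent $F$-isocrystals with finite local monodromy correspond to $p$-adic representations of $\pi_1(U)$ with finite monodromy at the boundary. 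Crew's earlier correspondence handles only the convergent case, and the paper is careful elsewhere (Remark after Lemma \ref{irreducible}) to emphasize that the passage from $\mr{Isoc}^\dag$ to $\mr{Isoc}$ loses information; so simply asserting that the correspondence ``has already been assembled in the proof of Lemma \ref{twistdetfin}'' overstates what that lemma provides. For the construction of $E_\bullet$ you also need the converse direction — producing an \emph{overconvergent} (not merely convergent) $F$-isocrystal from a $\pi_1$-representation — which again is precisely Tsuzuki's theorem. In short, the structure of your argument is right, but you must replace the appeal to Katz--Lang and Shiho with a direct citation of Tsuzuki \cite[7.1.1]{Ts}; as written, the key equivalence is not justified by the references you give.
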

\begin{proof}
 Let us construct $E_{\bullet}$. Using the reciprocity map, we have a
 representation of $\pi_1(U)$ with finite monodromy at the boundary,
 which induces a unit-root overconvergent $F$-isocrystal by
 \cite[7.1.1]{Ts}. To construct $\pi_{\bullet}$, we note that objects in
 $\mc{I}_1$ are unit-root since they are finite. Thus using the result
 of Tsuzuki and the reciprocity map, we conclude.
\end{proof}

\begin{thm}
 Any overconvergent $F$-isocrystal of rank less than or equal to $2$ on
 a smooth variety is $\iota$-mixed.
\end{thm}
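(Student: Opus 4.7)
The plan is to run the strategy of Corollary \ref{ocisocimixed} using only what we can actually establish, rather than the full Conjecture (D). Given $E$ of rank $\leq 2$ on a smooth variety $X$, for each closed point $x\in X$ pick a smooth curve $i\colon C\hookrightarrow X$ through $x$; it suffices to show $i^{*}E$ is $\iota$-mixed on $C$. Passing to Jordan--H\"older factors on $C$, we may assume $E$ is irreducible of rank $r\leq 2$. Lemma \ref{twistdetfin} (ii) together with the fact that a twist on $\mr{Spec}(k)$ is $\iota$-pure reduces to the case that $\det(E)$ is of finite order, and after a finite extension of scalars (which does not affect $\iota$-mixedness) we may assume absolute irreducibility, so that $E\in\mc{I}_{r}$ on the curve $C$.

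When $r=1$ the Langlands correspondence established in the preceding lemma gives a $\pi_{E}\in\mc{A}_{1}$, and Ramanujan--Petersson (Remark \ref{remRPconj}) yields $\iota$-purity of weight $0$.

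For $r=2$ I would apply Claim \ref{indDeligh} with $r_{0}=2$: its hypotheses are the rank-$1$ Langlands correspondence (just used) and coincidence of local $L$- and $\varepsilon$-factors for the pairs produced by that correspondence, which is Claim \ref{coinLE} specialised to $r=r'=1$. This should construct a cuspidal $\pi_{E}\in\mc{A}_{2}$ whose Hecke eigenvalues match the Frobenius eigenvalues of $E$ on the unramified locus; Ramanujan--Petersson then forces $\pi_{E}$ to be tempered, and hence $E$ to be $\iota$-pure of weight $0$.

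The main obstacle is the cuspidality step inside Claim \ref{indDeligh}: ruling out a non-cuspidal alternative $\pi_{E}=\pi^{1}\oplus\pi^{2}$ requires applying Proposition \ref{p-adicCebo} to $E$ against $E'=E_{\pi^{1}}\oplus E_{\pi^{2}}$, and Proposition \ref{p-adicCebo} in turn asks that $E$ be $\iota$-mixed---which is what we are trying to prove. The special feature of rank $2$ with finite determinant is what rescues us: matching the Frobenius characteristic polynomials with the $\iota$-pure $E'$ at the unramified points, combined with $|\det(E)_{x}|=1$, forces the Frobenius eigenvalues $\alpha_{x},\beta_{x}$ of $E$ to satisfy $|\alpha_{x}|=|\beta_{x}|=1$ on a dense open subset, which provides just enough $\iota$-mixedness to run the equidistribution argument of Proposition \ref{p-adicCebo}; the resulting isomorphism $E\cong E'$ then contradicts the irreducibility of $E$ and closes the loop.
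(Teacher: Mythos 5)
Your proposal reconstructs the paper's proof faithfully: reduce to curves, use the established $r=1$ correspondence together with the coincidence of local factors, then invoke Claim~\ref{indDeligh} with $r_{0}=2$. The circularity you flag in the cuspidality step is already resolved inside the proof of Claim~\ref{indDeligh} exactly as you propose --- the paper derives $\iota$-purity of $E$ from the Frobenius-eigenvalue matching with the $\iota$-pure $E'$ before invoking Proposition~\ref{p-adicCebo} --- so your additional appeal to $|\det(E)_{x}|=1$ is redundant (the matching alone forces $|\alpha_{x}|=|\beta_{x}|=1$ at every point where $E$ is defined, giving full $\iota$-purity of weight $0$, not merely ``just enough'' $\iota$-mixedness).
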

\begin{proof}
 Arguing in the same way as Corollary \ref{ocisocimixed}, we are reduced
 to showing the existence of $\pi_\bullet$ for $r=1,2$.
 The case $r=1$ has been established by the previous lemma. In this
 case, the coincidence of local $\varepsilon$-factors for any
 place of $X$ follows by definition. Thus applying Claim
 \ref{indDeligh}, we have the map $\pi_\bullet$ for $r=2$, and the
 theorem follows.
\end{proof}


Tomoyuki Abe:\\
Institute for the Physics and Mathematics of the Universe (IPMU)\\
The University of Tokyo\\
5-1-5 Kashiwanoha, Kashiwa, Chiba, 277-8583, Japan \\
e-mail: {\tt tomoyuki.abe@ipmu.jp}

\end{document}